\newtheorem{theorem}{Theorem}[section]
\newtheorem{lemma}[theorem]{Lemma}
\newtheorem{prop}[theorem]{Proposition}
\theoremstyle{definition}
\newtheorem{definition}[theorem]{Definition}
\newtheorem{con}[theorem]{Conjecture}
\theoremstyle{remark}
\numberwithin{equation}{section}
\let \la=\lambda
\let \e=\varepsilon
\let \d=\delta
\let \o=\omega
\let \a=\alpha
\let \b=\beta
\let \O=\Omega
\let \si=\sigma
\begin{document}
\title[estimate of Calder\'on-Zygmund operators]
{On an estimate of Calder\'on-Zygmund operators by dyadic positive operators}

\author{Andrei K. Lerner}
\address{Department of Mathematics,
Bar-Ilan University, 52900 Ramat Gan, Israel}
\email{aklerner@netvision.net.il}

\begin{abstract}
Given a general dyadic grid ${\mathscr{D}}$ and a sparse family of cubes ${\mathcal S}=\{Q_j^k\}\in {\mathscr{D}}$, define a dyadic positive operator
${\mathcal A}_{{\mathscr{D}},{\mathcal S}}$ by
$${\mathcal A}_{{\mathscr{D}},{\mathcal S}}f(x)=\sum_{j,k}f_{Q_j^k}\chi_{Q_j^k}(x).$$
Given a Banach function space $X({\mathbb R}^n)$ and the maximal Calder\'on-Zygmund operator $T_{\natural}$, we show that
$$\|T_{\natural}f\|_X\le c(n,T)\sup_{{\mathscr{D}},{\mathcal S}}\|{\mathcal A}_{{\mathscr{D}},{\mathcal S}}|f|\|_{X}.$$

This result is applied to weighted inequalities. In particular, it implies: (i) the ``two-weight conjecture" by D. Cruz-Uribe and C.~P\'erez
in full generality; (ii) a simplification of the proof of the ``$A_2$ conjecture"; (iii) an extension of certain mixed $A_p$-$A_r$ estimates to general
Calder\'on-Zygmund operators; (iv) an extension of sharp $A_1$ estimates (known for $T$) to the maximal Calder\'on-Zygmund operator~$T_{\natural}$.

\end{abstract}

\keywords{Banach function space, Calder\'on-Zygmund operator, Haar shift operator, local mean oscillation decomposition, $A_2$ conjecture, two-weight conjecture.}

\subjclass[2010]{42B20,42B25}

\maketitle

\section{Introduction}

A Calder\'on-Zygmund operator in ${\mathbb R}^n$ is an $L^2$ bounded integral operator with kernel $K$
satisfying the following growth and smoothness conditions:
\begin{enumerate}
\renewcommand{\labelenumi}{(\roman{enumi})}
\item
$|K(x,y)|\le \frac{c}{|x-y|^n}$ for all $x\not=y$;
\item
there exists $0<\d\le 1$ such that
$$|K(x,y)-K(x',y)|+|K(y,x)-K(y,x')|\le
c\frac{|x-x'|^{\d}}{|x-y|^{n+\d}},$$ whenever $|x-x'|<|x-y|/2$.
\end{enumerate}

Given a Calder\'on-Zygmund operator $T$, define its maximal truncated version by
$$T_{\natural}f(x)=\sup_{0<\e<\nu}\Big|\int_{\e<|y|<\nu}K(x,y)f(y)dy\Big|.$$

By a {\it general dyadic grid} ${\mathscr{D}}$ we mean a collection of
cubes with the following properties: (i)
for any $Q\in {\mathscr{D}}$ its sidelength $\ell_Q$ is of the form
$2^k, k\in {\mathbb Z}$; (ii) $Q\cap R\in\{Q,R,\emptyset\}$ for any $Q,R\in {\mathscr{D}}$;
(iii) the cubes of a fixed sidelength $2^k$ form a partition of ${\mathbb
R}^n$.

We say that $\{Q_j^k\}\in {\mathscr{D}}$ is a {\it sparse family} of dyadic cubes if:
(i)~the cubes $Q_j^k$ are disjoint in $j$, with $k$ fixed;
(ii) if $\Omega_k=\cup_jQ_j^k$, then $\Omega_{k+1}\subset~\Omega_k$;
(iii) $|\Omega_{k+1}\cap Q_j^k|\le \frac{1}{2}|Q_j^k|$.

Given a dyadic grid ${\mathscr{D}}$ and a sparse family ${\mathcal S}=\{Q_j^k\}\in {\mathscr{D}}$, consider
a dyadic positive operator ${\mathcal A}$ defined by
$${\mathcal A}f(x)={\mathcal A}_{{\mathscr{D}},{\mathcal S}}f(x)=\sum_{j,k}f_{Q_j^k}\chi_{Q_j^k}(x)$$
(we use the standard notation $f_Q=\frac{1}{|Q|}\int_Qf$).

Our main result is the following.

\begin{theorem}\label{mainr} Let $X$ be a Banach function space over ${\mathbb R}^n$ equipped with Lebesgue measure.
Then, for any appropriate $f$,
$$
\|T_{\natural}f\|_{X}\le c(T,n)\sup_{{\mathscr{D}},{\mathcal S}}\|{\mathcal A}_{{\mathscr{D}},{\mathcal S}}|f|\|_{X},
$$
where the supremum is taken over arbitrary dyadic grids ${\mathscr{D}}$ and sparse families ${\mathcal S}\in {\mathscr{D}}$.
\end{theorem}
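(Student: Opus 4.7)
The starting point is the local mean oscillation formula: for any measurable $g$ on a cube $Q_0$, there exists a sparse family $\{Q_j^k\}$ of dyadic subcubes of $Q_0$ such that for a.e.\ $x\in Q_0$,
$$
|g(x)-m_g(Q_0)|\le c_n\sum_{j,k}\o_{\la_n}(g;Q_j^k)\chi_{Q_j^k}(x),
$$
where $m_g(Q_0)$ is a median of $g$ on $Q_0$ and $\o_\la$ is the local mean oscillation. I apply this with $g=T_{\natural}f$. Letting $Q_0\uparrow{\mathbb R}^n$ along a sequence of cubes containing the origin, under the standing assumptions that make $T_{\natural}f$ a well-defined function (e.g.\ $f$ bounded with compact support), the median $m_{T_{\natural}f}(Q_0)$ tends to $0$, so a pointwise almost-everywhere bound on $|T_{\natural}f|$ survives.

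The key pointwise estimate is
$$
\o_{\la_n}(T_{\natural}f;Q)\le c(T,n)\sum_{m=0}^{\infty}2^{-m\d}|f|_{3\cdot 2^m Q},
$$
which follows by splitting $f=f\chi_{3Q}+f\chi_{(3Q)^c}$. For the local part, the weak-type $(1,1)$ bound for $T_{\natural}$ (via Cotlar's inequality) controls the rearrangement of $T_{\natural}(f\chi_{3Q})$ on $Q$ by $|f|_{3Q}$. For the non-local part, the smoothness condition (ii) on $K$ gives that $T_{\natural}(f\chi_{(3Q)^c})$ has oscillation on $Q$ bounded by the tail series $\sum_{m\ge 1}2^{-m\d}|f|_{3\cdot 2^m Q}$. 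Inserting this into the local mean oscillation decomposition yields a pointwise bound of the form
$$
|T_{\natural}f(x)|\le c(T,n)\sum_{j,k}\Big(\sum_{m\ge 0}2^{-m\d}|f|_{3\cdot 2^m Q_j^k}\Big)\chi_{Q_j^k}(x).
$$

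To convert the dilates $3\cdot 2^m Q_j^k$ into dyadic cubes of an honest general dyadic grid, I invoke the three-lattice lemma: every cube $R\subset{\mathbb R}^n$ sits inside a dyadic cube $\wid R$ with $|\wid R|\le c_n|R|$ in one of $3^n$ translated dyadic grids $\mathscr{D}^{(\a)}$. By pigeonhole, a positive fraction of the cubes $Q_j^k$ can be simultaneously handled by a single shifted grid $\mathscr{D}$; passing to this $\mathscr{D}$, each $3\cdot 2^m Q_j^k$ is absorbed into some dyadic $\wid Q_{j,m}^k\in\mathscr{D}$, and the geometric factor $2^{-m\d}$ together with the $1/2$-sparseness of $\{Q_j^k\}$ lets me re-collect the resulting cubes into a sparse family $\mathcal{S}\in\mathscr{D}$ (up to possibly absorbing a constant into $c(T,n)$). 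Thus
$$
|T_{\natural}f(x)|\le c(T,n)\,\mathcal{A}_{\mathscr{D},\mathcal{S}}|f|(x)\quad\text{a.e.}
$$
Applying the Banach function space norm $\|\cdot\|_X$ (which is monotone in $|\cdot|$) to both sides and taking supremum over $\mathscr{D},\mathcal{S}$ yields the theorem.

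The principal obstacle is the third paragraph: ensuring that after replacing dilates by dyadic cubes in one shifted grid, the resulting collection is truly a sparse family in $\mathscr{D}$ with sparseness constant uniform in $m$, so that the infinite series in $m$ collapses to a single operator $\mathcal{A}_{\mathscr{D},\mathcal{S}}$ rather than an infinite sum of such operators. This bookkeeping—balancing the smoothness exponent $\d$, the sparseness ratio $1/2$, and the finite multiplicity $3^n$ of shifted lattices—is where the argument must be executed with care.
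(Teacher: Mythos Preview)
Your route is genuinely different from the paper's and, in outline, is the ``Haar-shift-free'' approach that the author himself found afterwards (reference [L3] in the ``Added in proof''). The paper instead passes through Hyt\"onen's representation of $T$ by Haar shifts ${\mathbb S}_{\mathscr D}^{m,k}$ (Proposition~3.4), applies the Hyt\"onen--Lacey oscillation bound for ${\mathbb S}_{\natural}$ (Lemma~5.1) together with the decomposition Theorem~4.5, and is then left with the auxiliary operators ${\mathcal A}_i$ (averages over $i$-th dyadic ancestors). The hard technical step of the paper is (5.3): $\|{\mathcal A}_if\|_X\le c(n)\,i\,\sup\|{\mathcal A}f\|_X$, proved by analysing the adjoint ${\mathcal A}_i^{\star}$ via a weak-$(1,1)$ bound (Lemma~5.4) and a second application of the oscillation decomposition (Lemma~5.5). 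Your first two paragraphs replace the Haar-shift machinery by the direct kernel estimate $\o_{\la}(T_{\natural}f;Q)\le c\sum_m2^{-m\d}|f|_{3\cdot2^mQ}$, which is both correct and cleaner.

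The gap is exactly the step you flag as the ``principal obstacle,'' and your proposed resolution does not work. The sentence ``the geometric factor $2^{-m\d}$ together with the $1/2$-sparseness of $\{Q_j^k\}$ lets me re-collect the resulting cubes into a sparse family'' is not a valid argument: $2^{-m\d}$ is a scalar coefficient and has no effect on whether a collection of cubes is sparse. In fact the pointwise bound $|T_{\natural}f|\le c\,{\mathcal A}_{\mathscr D,\mathcal S}|f|$ by a \emph{single} sparse operator is strictly stronger than Theorem~1.1; it is true, but it was established only later by a stopping-time construction, not by reorganising the output of the oscillation decomposition. What the oscillation route actually delivers is, for each fixed $m$, an operator
\[
{\mathcal T}_mf(x)=\sum_{j,k}|f|_{3\cdot2^mQ_j^k}\chi_{Q_j^k}(x),
\]
and after the three-lattice reduction this is essentially the paper's ${\mathcal A}_i$ with $i\approx m$. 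One must still prove $\|{\mathcal T}_mf\|_X\le c(n,m)\sup_{\mathscr D,\mathcal S}\|{\mathcal A}_{\mathscr D,\mathcal S}|f|\|_X$ with $c(n,m)$ of at most polynomial growth; the sum in $m$ is then taken \emph{after} passing to the $X$-norm, using $\sum_m m\,2^{-m\d}<\infty$. To close this, either supply a direct proof that for each $m$ the distinct ``parent'' cubes $\{P_\nu\}\subset\mathscr D_\alpha$ form a sparse family with constant independent of $m$ (this is the genuinely delicate point --- the naive Carleson estimate picks up a factor $2^{mn}$), or observe that ${\mathcal T}_m$ is controlled by the paper's ${\mathcal A}_{m+O(1)}$ and invoke the adjoint argument of Lemmas~5.4--5.5. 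Either way, what you call ``bookkeeping'' is the substantive analytic step, and it is not carried out in your proposal.
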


We consider several applications of this result in the case when $X$ is the weighted Lebesgue space, $X=L^p(u)$ (by a weight we mean
a non-negative locally integrable function).

The operators similar to ${\mathcal A}$ were used in \cite{CMP,L,L2} to deal with several classical transforms represented in terms
of the Haar shift operators of bounded complexity (for example, the Hilbert, Riesz and Beurling transforms). Now, by Theorem \ref{mainr},
we have that the results obtained by this approach hold for arbitrary Calder\'on-Zygmund operators. In particular, we mention the work \cite{CMP} by D. Cruz-Uribe, J.~ Martell and
C.~P\'erez where it was found a very simple proof of both the ``two-weight" and ``$A_2$" conjectures for ${\mathcal A}$ (and hence for the above mentioned classical
operators). Now we have that this proof is automatically extended to any Calder\'on-Zygmund operator, and, in particular, this yields the ``two-weight conjecture"
due to D. Cruz-Uribe and C. P\'erez in full generality. Moreover, the approach to ${\mathcal A}$ from \cite{CMP} allows actually to get a rather general sufficient condition for the two-weighted boundedness of $T$.

First we observe that the two-weighted estimates for dyadic positive operators (and in particular for ${\mathcal A}$)
have been recently characterized by M.~Lacey, E. Sawyer and I. Uriarte-Tuero \cite{LSU}; a necessary and sufficient condition is expressed in
terms of Sawyer-type testing conditions. We mention here a different simple characterization which is partially based on an idea used in \cite{CMP} to deal with ${\mathcal A}$. Its advantage is that it avoids the use of the notions of ${\mathscr{D}}$ and ${\mathcal S}$. On the other hand, it requires the following bi(sub)linear maximal operator defined by
$${\mathcal M}(f,g)(x)=\sup_{Q\ni x}\left(\frac{1}{|Q|}\int_Q|f|\right)\left(\frac{1}{|Q|}\int_Q|g|\right),$$
where the supremum is taken over arbitrary cubes $Q$ containing the point $x$.

\begin{theorem}\label{difch} Let $1<p<\infty$ and let $u,v$ be arbitrary weights. Then the following equivalence
$$
\sup_{{\mathscr{D}},{\mathcal S}}\|{\mathcal A}_{{\mathscr{D}},{\mathcal S}}\|_{L^p(v)\to L^p(u)}\asymp
\|{\mathcal M}\|_{L^p(v)\times L^{p'}(u^{1-p'})\to L^1}
$$
holds with the corresponding constants depending only on $n$.
\end{theorem}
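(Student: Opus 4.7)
The plan is to establish the two inequalities separately, using throughout the weighted-$L^{p}$ duality
\[
\|F\|_{L^{p}(u)}=\sup\Bigl\{\textstyle\int Fg\,dx:g\ge 0,\ \|g\|_{L^{p'}(\sigma)}\le 1\Bigr\},\qquad \sigma:=u^{1-p'};
\]
since ${\mathcal A}_{{\mathscr{D}},{\mathcal S}}$ is positive and ${\mathcal M}(f,g)={\mathcal M}(|f|,|g|)$, we may assume $f,g\ge 0$.

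For the inequality $\sup\|{\mathcal A}_{{\mathscr{D}},{\mathcal S}}\|\lesssim\|{\mathcal M}\|$, fix a dyadic grid ${\mathscr{D}}$ and a sparse family ${\mathcal S}=\{Q_j^k\}$, and put $E_j^k:=Q_j^k\setminus\Omega_{k+1}$. These sets are pairwise disjoint and, by property (iii) of sparseness, $|E_j^k|\ge|Q_j^k|/2$. For admissible $g\ge 0$,
\[
\int_{{\mathbb R}^n}{\mathcal A}_{{\mathscr{D}},{\mathcal S}}f\cdot g\,dx=\sum_{j,k}f_{Q_j^k}g_{Q_j^k}|Q_j^k|\le 2\sum_{j,k}f_{Q_j^k}g_{Q_j^k}|E_j^k|\le 2\int_{{\mathbb R}^n}{\mathcal M}(f,g)\,dx,
\]
since $f_{Q_j^k}g_{Q_j^k}\le{\mathcal M}(f,g)(x)$ for $x\in E_j^k\subset Q_j^k$ and the $E_j^k$ are disjoint. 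Taking the supremum over $g$ produces the bound on $\|{\mathcal A}_{{\mathscr{D}},{\mathcal S}}\|_{L^{p}(v)\to L^{p}(u)}$.

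For the reverse direction, the plan is to construct, for given $f,g\ge 0$, a grid ${\mathscr{D}}$ and a sparse ${\mathcal S}\in{\mathscr{D}}$ satisfying
\[
\int_{{\mathbb R}^n}{\mathcal M}(f,g)\,dx\le c_n\int_{{\mathbb R}^n}{\mathcal A}_{{\mathscr{D}},{\mathcal S}}f\cdot g\,dx;
\]
then H\"older in $L^{p}(u)\times L^{p'}(\sigma)$ together with the hypothesis on the operator norm of ${\mathcal A}$ concludes the argument. By the standard ``three dyadic grids'' observation that every cube is contained in a comparable cube from one of $3^n$ shifted dyadic lattices, it suffices to handle the dyadic bilinear maximal ${\mathcal M}^{{\mathscr{D}}}(f,g)(x):=\sup_{Q\in{\mathscr{D}},\,Q\ni x}f_Qg_Q$ on a single grid ${\mathscr{D}}$; after a routine truncation, we assume that $f,g$ are supported in one top cube $P_0\in{\mathscr{D}}$.

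The key step is a joint principal-cube stopping time. With $a=4$, start from ${\mathcal S}_0=\{P_0\}$ and, for each $P\in{\mathcal S}_k$, let its principal children be the maximal proper dyadic subcubes $P'\subsetneq P$ with $f_{P'}>af_P$ \emph{or} $g_{P'}>ag_P$. Splitting by which condition triggers, Chebyshev applied to $f$ (resp.\ $g$) shows that each subfamily has total measure $\le|P|/a$, so the combined children have measure $\le 2|P|/a=|P|/2$; hence ${\mathcal S}:=\bigcup_k{\mathcal S}_k$ is sparse in the sense of the paper. For every $Q\in{\mathscr{D}}$ with $Q\subset P_0$, let $\pi(Q)\in{\mathcal S}$ be the smallest principal cube containing $Q$; maximality of the children forces $f_Q\le af_{\pi(Q)}$ and $g_Q\le ag_{\pi(Q)}$, so
\[
{\mathcal M}^{{\mathscr{D}}}(f,g)(x)\le a^2\sup_{P\in{\mathcal S},\,P\ni x}f_Pg_P\le a^2\sum_{P\in{\mathcal S},\,P\ni x}f_Pg_P.
\]
Integrating and recognising the right-hand side as $a^2\int{\mathcal A}_{{\mathscr{D}},{\mathcal S}}f\cdot g\,dx$ finishes the construction. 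The main obstacle is this joint stopping-time construction: one must stop on $f$ and $g$ simultaneously while still producing a family that meets the paper's precise sparseness conditions, and the crucial product bound $f_Qg_Q\le a^2 f_{\pi(Q)}g_{\pi(Q)}$ depends on having the combined stopping rule in place.
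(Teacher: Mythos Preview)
Your proof is correct. The bound $\|\mathcal{A}\|\lesssim\|\mathcal{M}\|$ is identical to the paper's. For the reverse inequality, both you and the paper first reduce the bilinear maximal operator to its dyadic version via a finite collection of shifted grids (you invoke $3^n$ grids, the paper uses the $2^n$-grid version of Proposition~\ref{prhp}; this is immaterial). The genuine difference lies in how the sparse family is produced from $f$ and $g$. The paper runs a level-set Calder\'on--Zygmund decomposition of $\mathcal{M}^d(f,g)$, taking $\Omega_k=\{\mathcal{M}^d(f,g)>c_n^{\,k}\}$ with $c_n=2^{2(n+1)}$ and verifying sparseness by a Cauchy--Schwarz trick on the product $f_Qg_Q$. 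You instead run a joint principal-cubes stopping time on $f$ and $g$ simultaneously, which makes the sparseness estimate a plain Chebyshev bound and the product control $f_Qg_Q\le a^2 f_{\pi(Q)}g_{\pi(Q)}$ immediate from maximality. Both constructions are standard and yield purely dimensional constants; your stopping-time route is arguably the more elementary of the two, while the paper's level-set argument avoids fixing a top cube. Your reduction to a single top cube $P_0$ is harmless but does need the limiting argument you allude to; the paper sidesteps this by working globally with $f,g\in L^1$ from the outset.
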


In order to give a general formulation of the two-weighted Muckenhoupt type sufficient condition for $T$, we invoke again the Banach function space $X$.
Given a cube $Q$, define the $X$-average of $f$ over $Q$ and the maximal operator $M_X$ by
$$\|f\|_{X,Q}=\|(f\chi_Q)(\ell_Q\cdot)\|_X,\quad M_Xf(x)=\sup_{Q\ni x}\|f\|_{X,Q}.$$
This operator was introduced and studied by C. P\'erez \cite{P0,P}. By $X'$ we denote the associate space to $X$.

Theorems \ref{mainr} and \ref{difch} easily imply the following.

\begin{theorem}\label{twosuf} Let $1<p<\infty$, and let $X$ and $Y$ be the Banach function spaces such that $M_{X'}$ and $M_{Y'}$ are bounded on $L^{p'}$ and $L^{p}$,
respectively. Then
\begin{equation}\label{twoestim}
\|T_{\natural}\|_{L^p(v)\to L^p(u)}\le c\sup_Q\|u^{1/p}\|_{X,Q}\|v^{-1/p}\|_{Y,Q}.
\end{equation}
\end{theorem}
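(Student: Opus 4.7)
The strategy is to chain Theorems \ref{mainr} and \ref{difch}. Applying Theorem \ref{mainr} with $X=L^p(u)$, and then using the trivial bound $\|\mathcal{A}_{\mathscr{D},\mathcal{S}}|f|\|_{L^p(u)}\le \|\mathcal{A}_{\mathscr{D},\mathcal{S}}\|_{L^p(v)\to L^p(u)}\|f\|_{L^p(v)}$, followed by Theorem \ref{difch}, one obtains
\begin{equation*}
\|T_{\natural}f\|_{L^p(u)}\le c(T,n)\,\|\mathcal{M}\|_{L^p(v)\times L^{p'}(u^{1-p'})\to L^1}\,\|f\|_{L^p(v)}.
\end{equation*}
Hence the whole problem reduces to the bilinear estimate
\begin{equation*}
\|\mathcal{M}\|_{L^p(v)\times L^{p'}(u^{1-p'})\to L^1}\le c\,K,\qquad K:=\sup_Q\|u^{1/p}\|_{X,Q}\|v^{-1/p}\|_{Y,Q}.
\end{equation*}

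To handle $\mathcal{M}$, I would invoke the generalized H\"older inequality for Banach function spaces, $\frac{1}{|Q|}\int_Q|\varphi\psi|\le \|\varphi\|_{Z,Q}\|\psi\|_{Z',Q}$. Setting $F=fv^{1/p}$ and $G=gu^{-1/p}$ (so that $\|F\|_{L^p}=\|f\|_{L^p(v)}$ and $\|G\|_{L^{p'}}=\|g\|_{L^{p'}(u^{1-p'})}$), for each cube $Q\ni x$ I would apply the generalized H\"older inequality to $|f|=Fv^{-1/p}$ in the dual pair $(Y',Y)$ and to $|g|=Gu^{1/p}$ in the dual pair $(X',X)$, obtaining
\begin{equation*}
\frac{1}{|Q|}\int_Q|f|\cdot\frac{1}{|Q|}\int_Q|g|\le \|F\|_{Y',Q}\|v^{-1/p}\|_{Y,Q}\cdot\|G\|_{X',Q}\|u^{1/p}\|_{X,Q}\le K\,\|F\|_{Y',Q}\|G\|_{X',Q}.
\end{equation*}
Taking the supremum over $Q\ni x$ yields the pointwise majorisation $\mathcal{M}(f,g)(x)\le K\,M_{Y'}F(x)\,M_{X'}G(x)$.

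Integrating this pointwise bound and applying H\"older's inequality with exponents $p,p'$, together with the hypothesized boundedness $M_{Y'}:L^p\to L^p$ and $M_{X'}:L^{p'}\to L^{p'}$, gives
\begin{equation*}
\int_{\mathbb{R}^n}\mathcal{M}(f,g)\le K\,\|M_{Y'}F\|_{L^p}\|M_{X'}G\|_{L^{p'}}\le c\,K\,\|f\|_{L^p(v)}\|g\|_{L^{p'}(u^{1-p'})},
\end{equation*}
which completes the argument. I do not foresee a genuine obstacle here: the heavy lifting has already been done in Theorems \ref{mainr} and \ref{difch}, and what remains is a duality-plus-H\"older bookkeeping exercise. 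The only delicate point is to track the scaling convention built into the localized norm $\|\cdot\|_{X,Q}$, so that the weight factors $u^{1/p}$ and $v^{-1/p}$ end up paired against the correct dual norms $\|\cdot\|_{X',Q}$ and $\|\cdot\|_{Y',Q}$ respectively.
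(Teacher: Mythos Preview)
Your proposal is correct and follows essentially the same route as the paper: reduce via Theorems \ref{mainr} and \ref{difch} to bounding $\|\mathcal{M}\|_{L^p(v)\times L^{p'}(u^{1-p'})\to L^1}$, then split the averages with the generalized H\"older inequality to obtain the pointwise bound $\mathcal{M}(f,g)\le K\,M_{Y'}(fv^{1/p})\,M_{X'}(gu^{-1/p})$ and finish with H\"older plus the assumed $L^p,L^{p'}$ bounds on $M_{Y'},M_{X'}$. The only cosmetic difference is the order of presentation---the paper states the bilinear estimate first and then invokes Theorems \ref{mainr} and \ref{difch}, whereas you invoke them first.
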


Assume that $X=L^p$. Then $M_{X'}=M_{L^{p'}}$. The operator $M_{L^{p'}}$ is not bounded on $L^{p'}$
since this would be equivalent to the boundedness of $M$ on $L^1$. Similarly, if $Y=L^{p'}$, then $M_{Y'}=M_{L^p}$ is not bounded on $L^{p}$.
It is natural that in the case $X=L^p,Y=L^{p'}$ the condition of the theorem is not satisfied since in this case
the finiteness of the right-hand side of (\ref{twoestim}) means that a couple $(u,v)$ satisfies the $A_p$ Muckenhoupt
condition. But it is well known that $(u,v)\in A_p$ is not sufficient even for the two-weighted boundedness of the Hardy-Littlewood maximal operator~$M$~\cite{S}.
On the other hand, taking the $X$ and $Y$ averages on the right-hand side of (\ref{twoestim}) a bit bigger than the $L^p$ and $L^{p'}$ averages, the corresponding
operators $M_{X'}$ and $M_{Y'}$ will be a bit smaller than $M_{L^{p'}}$ and $M_{L^p}$, and we obtain their boundedness on $L^{p'}$ and $L^{p}$,
respectively, and therefore a sufficient two-weighted condition.

A typical situation occurs when $X=L^{A}$ is the Orlicz space defined by means of the Young function $A$, equipped with the Luxemburg norm. In this case $X'=L^{\bar A}$
(with the equivalence of norms), where $\bar A$ is the Young function complementary to $A$. The boundedness of $M_{L^{A}}$ on $L^p$ was characterized by C. P\'erez \cite{P}; a necessary and sufficient condition is the $B_p$ condition which says that for some $c>0$,
$$\int_c^{\infty}\frac{A(t)}{t^p}\frac{dt}{t}<\infty.$$
Hence, if $X=L^{A}$ and $Y=L^{B}$, the boundedness of $M_{X'}$ and $M_{Y'}$ on $L^{p'}$ and $L^{p}$ is equivalent to that
$\bar A\in B_{p'}$ and $\bar B\in B_{p}$. In this case Theorem \ref{twosuf} yields the ``two-weight conjecture" by D.~Cruz-Uribe and C. P\'erez
mentioned above (we use the notation $\|f\|_{L^A,Q}=\|f\|_{A,Q}$).

\begin{con}\label{two-weight} {\it Given $p,1<p<\infty,$ let $A$ and $B$ be two Young functions such that
$\bar A\in B_{p'}$ and $\bar B\in B_{p}$. If a couple of weights $(u,v)$ satisfies
$$
\sup_Q\|u^{1/p}\|_{A,Q}\|v^{-1/p}\|_{B,Q}<\infty,
$$
then
$$\|Tf\|_{L^p(u)}\le c\|f\|_{L^p(v)}.$$
}
\end{con}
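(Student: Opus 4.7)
My plan is to obtain Conjecture \ref{two-weight} as an immediate specialization of Theorem \ref{twosuf}, taking the Banach function spaces $X=L^A$ and $Y=L^B$ to be the Orlicz spaces built from the given Young functions $A$ and $B$ (equipped with the Luxemburg norm). Under this choice the averages $\|u^{1/p}\|_{X,Q}$ and $\|v^{-1/p}\|_{Y,Q}$ in \eqref{twoestim} become exactly the Orlicz averages $\|u^{1/p}\|_{A,Q}$ and $\|v^{-1/p}\|_{B,Q}$ appearing in the conjecture, so the only thing to check is that the two hypotheses of Theorem \ref{twosuf} on $M_{X'}$ and $M_{Y'}$ are met.

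The first substantive step is identification of the associate spaces: for an Orlicz space $L^A$ with the Luxemburg norm, the classical duality identity $(L^A)'=L^{\bar A}$ (with equivalence of norms, where $\bar A$ is the Young function complementary to $A$) gives $M_{X'}=M_{L^{\bar A}}$ and $M_{Y'}=M_{L^{\bar B}}$. The second step is to invoke P\'erez's characterization from \cite{P}: for a Young function $C$ and $1<q<\infty$, the maximal operator $M_{L^C}$ is bounded on $L^q$ if and only if $C\in B_q$. Applying this with $(C,q)=(\bar A,p')$ and $(C,q)=(\bar B,p)$ shows that the assumed conditions $\bar A\in B_{p'}$ and $\bar B\in B_p$ translate precisely to the hypotheses of Theorem \ref{twosuf}.

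With these verifications in hand, Theorem \ref{twosuf} delivers
$$\|T_{\natural}f\|_{L^p(u)}\le c\Big(\sup_Q\|u^{1/p}\|_{A,Q}\|v^{-1/p}\|_{B,Q}\Big)\|f\|_{L^p(v)},$$
and the corresponding bound for $Tf$ follows from the pointwise domination $|Tf(x)|\le T_{\natural}f(x)$ (valid after the standard identification of $T$ with the limit of its truncations). No real obstacle arises at this stage; the deep content of the argument has been fully absorbed into Theorem \ref{mainr}, which controls $T_{\natural}$ by the sparse operators $\mathcal{A}_{\mathscr{D},\mathcal S}$ in any Banach function space norm, and into Theorem \ref{difch}, from which Theorem \ref{twosuf} is obtained by direct specialization to two-weight $L^p$ norms. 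The only mild subtlety to keep in mind is the choice of normalization for the $X$-averages and the use of the equivalence $(L^A)'\cong L^{\bar A}$, which requires the $\Delta_2$-free duality of Orlicz spaces expressed through the Luxemburg norm.
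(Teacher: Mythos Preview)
Your proposal is correct and follows exactly the route the paper takes: the discussion immediately preceding Conjecture~\ref{two-weight} explains that choosing $X=L^A$, $Y=L^B$, using $(L^A)'=L^{\bar A}$, and invoking P\'erez's $B_p$ criterion reduces the conjecture to Theorem~\ref{twosuf}. Your additional remark that the bound for $T$ follows from that for $T_{\natural}$ via $|Tf|\le T_{\natural}f$ is a correct finishing touch that the paper leaves implicit.
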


For a complete history of this conjecture and partial results we refer to a recent book \cite{CMP2}.
Under certain restrictions on $A$ and $B$, the conjecture was proved in \cite{CMP1}.
By means of the ``local mean oscillation decomposition" the conjecture was proved for any $T$ in the case $p>n$ in~\cite{L1}.
After that, using the same decomposition and the operator ${\mathcal A}$,
the conjecture was proved for the Hilbert, Riesz and Beurling transforms in \cite{CMP}.
In a recent work \cite{NRTV}, the conjecture was completely proved in the
case $p=2$ by means of the Bellman function method. Also, in \cite{CRV}, the conjecture was proved for
the so-called $\log$ bumps and for certain $\log\log$ bumps.

The rest of applications of Theorem \ref{mainr} are given in Section 2 below.
We turn now to the main ingredients used in the proof of this theorem.

\begin{list}{\labelitemi}{\leftmargin=1em}
\item
A representation of $T$ in terms of the Haar shift operators ${\mathbb S}={\mathbb S}_{{\mathscr{D}}}^{m,k}$
obtained by T. Hyt\"onen \cite{H} (see also \cite{H1,HPTV}), and its ``maximal truncated"
corollary proved in \cite{HLMORSU}.
\item
A recent estimate by T. Hyt\"onen and M. Lacey \cite{HL} where they used the ``local mean oscillation decomposition" from \cite{L1} to bound
${\mathbb S}_{{\mathscr{D}}}^{m,k}$ by the sum $\sum_{i=1}^{\kappa+1}{\mathcal A}_i$, where $\kappa=\max(k,m,1)$ is the complexity of ${\mathbb S}_{{\mathscr{D}}}^{m,k}$, and the operators ${\mathcal A}_i$ are defined by
$${\mathcal A}_if(x)={\mathcal A}_{{\mathscr{D}},{\mathcal S},i}f(x)=\sum_{k,j}f_{(Q_j^k)^{{(i)}}}\chi_{Q_j^k}(x)$$
(here $Q^{(i)}$ denotes the $i$-th ancestor of $Q$, that is, the unique dyadic cube containing $Q$ and such that ${\ell}_{Q^{(i)}}=2^i{\ell}_Q$).
Observe that the idea to bound ${\mathbb S}_{{\mathscr{D}}}^{m,k}$ by operators ${\mathcal A}_i$ goes back to \cite{CMP}.
But a crucial point is the linear dependence on ${\kappa}$ in \cite{HL},
while it was exponential in \cite{CMP}.
\item
The key idea in \cite{HL} was that ${\mathcal A}_i$ can be viewed as a Haar shift operator of complexity $i$,
but with a positive kernel. This fact allowed to simplify certain arguments used when dealing with general Haar shift operators.
Our novel point in this paper is that one can use again the ``local mean oscillation decomposition" to bound ${\mathcal A}_i$.
More precisely, we consider the formal adjoint of ${\mathcal A}_i$ given by
$${\mathcal A}_i^{\star}f(x)=\frac{1}{2^{in}}\sum_{k,j}f_{Q_j^k}\chi_{(Q_j^k)^{(i)}}(x).$$
We show that given a finite sparse family ${\mathcal S}_1$, there is a sparse family ${\mathcal S}_2$ such that for a.e. $x$,
\begin{equation}\label{esa}
{\mathcal A}_{{\mathcal S}_1,i}^{\star}f(x)\le c(n)i\big(Mf(x)+{\mathcal A}_{{\mathcal S}_2}f(x)\big).
\end{equation}
Combining this estimate with the above mentioned ingredients leads easily to Theorem \ref{mainr}.
\end{list}

Observe that the ``local mean oscillation decomposition" proved in \cite{L1} states that
\begin{equation}\label{de}
|f(x)-m_f(Q_0)|\le
4M_{1/4;Q_0}^{\#,d}f(x)+4\sum_{k,j}
\o_{\frac{1}{2^{n+2}}}(f;(Q_j^k)^{(1)})\chi_{Q_j^k}(x)
\end{equation}
(see Section 4 below for the definitions of the objects involved here). This estimate would allow to get (\ref{esa}) with ${\mathcal A}_1$ instead of
${\mathcal A}={\mathcal A}_0$ on the right-hand side, and, as a result, we would get Theorem \ref{mainr} with~${\mathcal A}_1$. This is not actually important
from point of view of main applications. But in order to arrive to a smaller operator ${\mathcal A}$, we will use the following variant of (\ref{de}) proved
in Theorem~\ref{decom1} below:
$$
|f(x)-m_f(Q_0)|\le
4M_{\frac{1}{2^{n+2}};Q_0}^{\#,d}f(x)+2\sum_{k,j}
\o_{\frac{1}{2^{n+2}}}(f;Q_j^k)\chi_{Q_j^k}(x).
$$
The main difference with (\ref{de}) is that the oscillations here are taken over the cubes $Q_j^k$.

The paper is organized as follows. In Section 2 we give some other applications of Theorem \ref{mainr}. Section 3 contains basic facts
concerning the Haar shift operators. In Section 4 we prove the above mentioned version of the ``local mean oscillation decomposition".
Theorem \ref{mainr} is proved in Section 5. Finally, in Section 6 we prove Theorems \ref{difch} and~\ref{twosuf}.

Throughout the paper we will use the following notation. Given a sparse family $\{Q_j^k\}$, set $E_j^k=Q_j^k\setminus \Omega_{k+1}$.
Observe that the sets $E_j^k$ are pairwise disjoint and $|Q_j^k|\le 2|E_j^k|$. In the case when the argument does not depend on a particular
grid ${\mathscr{D}}$ and a sparse family ${\mathcal S}\in {\mathscr{D}}$ we drop the subscripts ${\mathscr{D}}$ and ${\mathcal S}$, and we will
assume that ${\mathscr{D}}$ is the standard dyadic grid.

\section{Applications}
\subsection{The ``$A_2$ conjecture"} Given a weight $w$, define its
$A_p$ characteristic by
$$\|w\|_{A_p}\equiv\sup_QA_p(w;Q)=\left(\frac{1}{|Q|}\int_Qw\,dx\right)
\left(\frac{1}{|Q|}\int_Qw^{-\frac{1}{p-1}}\,dx\right)^{p-1},$$
The ``$A_2$ conjecture" states that for a Calder\'on-Zygmund operator $T$,
\begin{equation}\label{a2}
\|T\|_{L^p(w)}\le
c(T,p,n)\|w\|_{A_p}^{\max\big(1,\frac{1}{p-1}\big)}\quad(1<p<\infty).
\end{equation}
Note that by extrapolation it suffices to get this result in the case $p=2$ (this explains the name of the conjecture).
In its full generality this conjecture was recently settled by T. Hyt\"onen~\cite{H} (see also \cite{H1,HPTV}). Soon after that it was shown in
\cite{HLMORSU} that (\ref{a2}) holds for $T_{\natural}$ as well. The proof of (\ref{a2}) is based on the
representation of $T$ in terms of the Haar shift operators ${\mathbb S}_{{\mathscr{D}}}^{m,k}$. After that the proof reduces to showing (\ref{a2}) for
${\mathbb S}_{{\mathscr{D}}}^{m,k}$ in place of $T$ with the corresponding constant depending linearly (or polynomially) on the complexity.
Observe that over the past year several different proofs of the latter step appeared (see, e.g., \cite{La,T}).

We now have that (\ref{a2}) follows immediately from Theorem \ref{mainr} combined with the estimate
\begin{equation}\label{estcmp}
\|{\mathcal A}\|_{L^2(w)}\le c(n)\|w\|_{A_2}
\end{equation}
proved in \cite{CMP}. The proof of (\ref{estcmp}) is quite elementary, and we give it here for the sake of
the completeness. Let $M^d_w$ be the dyadic weighted maximal operator; we use that it is bounded on $L^p(w)$ with the bound independent of $w$.
Assuming that $f,g\ge 0$, by H\"older's inequality we have
\begin{eqnarray*}
\int_{{\mathbb R}^n}({\mathcal A}f)gdx&=&\sum_{j,k}f_{Q_j^k}g_{Q_j^k}|Q_j^k|\\
&\le& 2\sum_{j,k}A_2(w;Q_j^k)\Big(\frac{1}{w^{-1}(Q_j^k)}\int_{Q_j^k}f\Big)
\Big(\frac{1}{w(Q_j^k)}\int_{Q_j^k}g\Big)|E_j^k|\\
&\le& 2\|w\|_{A_2}\sum_{j,k}\int_{E_j^k}M^d_{w^{-1}}(fw)M^d_w(gw^{-1})dx\\
&\le& 2\|w\|_{A_2}
\int_{{\mathbb R}^n}M^d_{w^{-1}}(fw)M^d_w(gw^{-1})dx\\
&\le& 2\|w\|_{A_2}\|M^d_{w^{-1}}(fw)\|_{L^2(w^{-1})}\|M^d_w(gw^{-1})\|_{L^2(w)}\\
&\le& c\|w\|_{A_2}\|f\|_{L^2(w)}\|g\|_{L^2(w^{-1})},
\end{eqnarray*}
which yields (\ref{estcmp}) by duality.

Resuming, the ``$A_2$ conjecture" follows now from the next steps.
\begin{list}{\labelitemi}{\leftmargin=1em}
\item A representation of $T$ in terms of ${\mathbb S}_{{\mathscr{D}}}^{m,k}$ \cite{H,H1,HPTV}.
\item The ``local mean oscillation decomposition" bound of ${\mathbb S}_{{\mathscr{D}}}^{m,k}$ by the operators ${\mathcal A}_i$ \cite{HL}.
\item The ``local mean oscillation decomposition" bound of ${\mathcal A}^{\star}_i$ by ${\mathcal A}$.
\item The $L^2(w)$ bound of ${\mathcal A}$ \cite{CMP}.
\end{list}

\subsection{Mixed $A_p$-$A_{\infty}$ estimates}
Given a weight $w$, define its $A_{\infty}$ characteristic by
$$\|w\|_{A_{\infty}}=\sup_{Q\subset {\mathbb R}^n}\frac{1}{w(Q)}\int_QM(w\chi_Q).$$
Note that $\|w\|_{A_{\infty}}\le c(p,n)\|w\|_{A_p}$ for any $p>1$.

M. Lacey \cite{L} showed that for classical singular integrals (\ref{a2}) can be improved as follows
\begin{equation}\label{Lac}
\|T_{\natural}\|_{L^p(w)}\le
c(T,p,n)\|w\|_{A_p}^{1/p}\max\Big((\|w\|_{A_{\infty}}')^{1/p'}
,(\|\si\|_{A_{\infty}}')^{1/p}\Big),
\end{equation}
where $\si=w^{1-p'}$. Also, it was conjectured in \cite{L} that this estimate holds for any Calder\'on-Zygmund operator.
Soon after that the conjecture was proved in \cite{HL}; the proof was based on the analysis of the operators ${\mathcal A}_i$.
On the other hand, the proof in \cite{L} was based on showing (\ref{Lac}) for ${\mathcal A}$ in place of $T_{\natural}$. Hence, by Theorem~\ref{mainr}
we have that this proof actually yields (\ref{Lac}) in the general case.

\subsection{Mixed $A_p$-$A_{r}$ estimates}
Given a weight $w$, define its mixed $A_p$-$A_r$ characteristic by
$$\|w\|_{(A_p)^{\a}(A_r)^{\b}}=\sup_{Q\subset {\mathbb R}^n}A_p(w;Q)^{\a}A_r(w;Q)^{\b},$$
where $\a,\b\ge0$.

In \cite{L2}, it was proved that for any $2\le p\le r<\infty$,
\begin{equation}\label{esti1}
\|{\mathcal A}\|_{L^p(w)}\le
c(p,r,n)\|w\|_{(A_p)^{\frac{1}{p-1}}(A_r)^{1-\frac{1}{p-1}}}.
\end{equation}
By duality, it follows from this that for any $1<p<2$ and $r>p'$,
\begin{equation}\label{esti2}
\|{\mathcal A}\|_{L^p(w)}\le c(p,r,n)\|\si\|_{(A_{p'})^{\frac{1}{p'-1}}(A_r)^{1-\frac{1}{p'-1}}}
\end{equation}

From this, estimates (\ref{esti1}) and (\ref{esti2}) were obtained in \cite{L2} for classical singular integrals in place of ${\mathcal A}$. Now, by Theorem \ref{mainr}
we have that they hold for any Calder\'on-Zygmund operator $T$ (and $T_{\natural}$). Note that the difference between these estimates and (\ref{Lac}) is that
in the mixed $A_p$-$A_r$ characteristic only one supremum is involved, while the right-hand side of (\ref{Lac}) involves two independent suprema. It was shown in \cite{L2} by simple examples that the right-hand sides in (\ref{esti1}) and (\ref{Lac}) are incomparable. In \cite{HL}, a new conjecture was posed about the $L^p(w)$ bound for $T$ implying the estimates of both types. By Theorem \ref{mainr} we have that it suffices to prove this conjecture for~${\mathcal A}$. However, even for this simple operator the new conjecture is still not clear.

\subsection{Sharp $A_1$ estimates}
Recall that $w$ is an $A_1$
weight if there exists $c>0$ such that $Mw(x)\le cw(x)$ a.e.; the
smallest possible $c$ here is denoted by $\|w\|_{A_1}$.

It was proved in \cite{LOP} that for any $w\in
A_1$,
\begin{equation}\label{sha1}
\|Tf\|_{L^p(w)}\le
c(n,T)pp'\|w\|_{A_1}\|f\|_{L^p(w)}\quad(1<p<\infty)
\end{equation}
and
\begin{equation}\label{sha2}
\|Tf\|_{L^{1,\infty}(w)}\le
c(n,T)\|w\|_{A_1}\log(1+\|w\|_{A_1})\|f\|_{L^1(w)}.
\end{equation}

The so-called weak Muckenhoupt-Wheeden conjecture says that (\ref{sha2}) holds with the linear dependence on $\|w\|_{A_1}$.
However, this was recently disproved in \cite{NRVV}, and it raises a conjecture that the $L\log L$ dependence on $\|w\|_{A_1}$
in (\ref{sha2}) is best possible.

Very recently, both estimates (\ref{sha1}) and (\ref{sha2}) have been improved by T. Hyt\"onen and C. P\'erez \cite{HP} as follows:
\begin{equation}\label{sha3}
\|Tf\|_{L^p(w)}\le
c(n,T)pp'\|w\|_{A_1}^{1/p}\|w\|_{A_{\infty}}^{1/p'}\|f\|_{L^p(w)}\quad(1<p<\infty)
\end{equation}
and
\begin{equation}\label{sha4}
\|Tf\|_{L^{1,\infty}(w)}\le
c(n,T)\|w\|_{A_1}\log(1+\|w\|_{A_{\infty}})\|f\|_{L^1(w)}.
\end{equation}

Note that (\ref{sha4}) follows from (\ref{sha3}) by means of the Calder\'on-Zygmund method. Inequality (\ref{sha3}) was deduced in \cite{HP} from
a sharp version of the reverse H\"older inequality along with the estimate
\begin{equation}\label{intest}
\|Tf\|_{L^p(w)}\le c(n,T)pp'\Big(\frac{1}{r-1}\Big)^{1-1/pr}\|f\|_{L^p(M_rw)}\quad (1<r<2).
\end{equation}
proved in \cite{LOP} (here $M_rw=M(w^r)^{1/r}$). The method of the proof of (\ref{intest}) leaves open a question whether this inequality (and so (\ref{sha3}) and (\ref{sha4})) holds for the maximal Calder\'on-Zygmund operator $T_{\natural}$ as well. Theorem \ref{mainr} yields a positive answer to this question.

\begin{theorem}\label{a1es}
Inequalities (\ref{sha3}) and (\ref{sha4}) remain true for $T_{\natural}$ in place of $T$.
\end{theorem}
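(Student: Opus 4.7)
The plan is to first establish (\ref{intest}) for $T_{\natural}$ by combining Theorem \ref{mainr} with the proof of \cite{LOP}, and then to derive (\ref{sha3}) and (\ref{sha4}) via the scheme of Hyt\"onen--P\'erez \cite{HP}. The weighted space $L^p(w)$ is a Banach function space over ${\mathbb R}^n$ with Lebesgue measure, so Theorem \ref{mainr} gives
\begin{equation*}
\|T_{\natural} f\|_{L^p(w)}\le c(T,n)\sup_{{\mathscr{D}},{\mathcal S}}\|{\mathcal A}_{{\mathscr{D}},{\mathcal S}}|f|\|_{L^p(w)},
\end{equation*}
which reduces (\ref{intest}) for $T_{\natural}$ to the corresponding bound for an arbitrary sparse operator ${\mathcal A}$. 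The sparse analog is obtained by adapting the argument of \cite{LOP}: dualize against $g\in L^{p'}(w)$, use the pairing $\sum_{j,k}f_{Q_j^k}(gw)_{Q_j^k}|Q_j^k|\le 2\sum_{j,k}f_{Q_j^k}(gw)_{Q_j^k}|E_j^k|$ coming from sparsity, and then run the same sharp weighted maximal and H\"older estimates as in \cite{LOP}. The positive dyadic setting is in fact simpler than the general Calder\'on-Zygmund case treated there, so the sharp dependence $pp'(r-1)^{-(1-1/pr)}$ survives directly.

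Once (\ref{intest}) is available for $T_{\natural}$, the strong-type inequality (\ref{sha3}) for $T_{\natural}$ follows by applying the argument of \cite{HP} verbatim: invoke the sharp reverse H\"older inequality for $A_{\infty}$ weights to choose $r=1+(c_n\|w\|_{A_{\infty}})^{-1}$, so that $M_rw$ is comparable to $Mw$ and hence controlled by $\|w\|_{A_1}w$ when $w\in A_1$, while the factor $(r-1)^{-(1-1/pr)}$ becomes $\|w\|_{A_{\infty}}^{1/p'}$ up to constants. For (\ref{sha4}) one cannot use Theorem \ref{mainr} directly, because $L^{1,\infty}(w)$ is not a Banach function space; instead one deduces it from (\ref{sha3}) (with $p=2$, say) via the standard Calder\'on--Zygmund decomposition exactly as in \cite{HP}. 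The good part is bounded by (\ref{sha3}) for $T_{\natural}$, and the bad part is handled by a pointwise kernel-side computation using smoothness~(ii) from the introduction, which is insensitive to the truncation supremum defining $T_{\natural}$ since the kernel bound is independent of $\e$ and $\nu$.

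The main obstacle, such as it is, is purely bookkeeping: one must verify that the sharp quantitative dependence on $p$, $r$, $\|w\|_{A_1}$ and $\|w\|_{A_{\infty}}$ is preserved at each reduction. Because the arguments in \cite{HP} and \cite{LOP} are entirely quantitative and the passage from $T_{\natural}$ to ${\mathcal A}$ through Theorem \ref{mainr} absorbs only the absolute constant $c(T,n)$, no genuine new analytic difficulty arises beyond this accounting.
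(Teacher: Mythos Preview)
Your proposal is correct and follows essentially the same route as the paper: reduce (\ref{intest}) for $T_{\natural}$ to the sparse operator ${\mathcal A}$ via Theorem~\ref{mainr}, establish (\ref{intest}) for ${\mathcal A}$ by an elementary sparsity/duality computation, and then invoke the scheme of \cite{HP} (sharp reverse H\"older for (\ref{sha3}), Calder\'on--Zygmund decomposition for (\ref{sha4})). The only cosmetic difference is that the paper proves the intermediate inequality $\int({\mathcal A}f)w\le 2\int(Mf)^{\d}M((Mf)^{1-\d}w)$ and cites \cite{L1} for the passage to (\ref{intest}), whereas you dualize directly in the style of \cite{LOP}; both are straightforward for~${\mathcal A}$.
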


It follows from the discussion above that it suffices to prove (\ref{intest}) for $T_{\natural}$. The rest of the argument is exactly the same as in \cite{HP}.
Next, by Theorem \ref{mainr}, it suffices to prove (\ref{intest}) for ${\mathcal A}$. This can be done in a variety of ways. For example, it was shown in \cite{L1}
that (\ref{intest}) follows from
$$\int_{{\mathbb R}^n}|Tf|w\,dx\le c(n,T)\int_{{\mathbb
R}^n}(Mf)^{\d}M\big((Mf)^{1-\d}w\big)\,dx\quad(0<\d\le 1).$$
Exactly as in \cite{L1} we have that this inequality with ${\mathcal A}$ in place of $T$ would imply (\ref{intest}) for ${\mathcal A}$. But this is almost trivial:
\begin{eqnarray*}
\int_{{\mathbb R}^n}({\mathcal A}f)w\,dx&=&\sum_{k,j}f_{Q_j^k}w_{Q_j^k}|Q_j^k|\le 2\sum_{k,j}(f_{Q_j^k})^{\d}((Mf)^{1-\d}w)_{Q_j^k}|E_j^k|\\
&\le& 2\sum_{j,k}\int_{E_j^k}(Mf)^{\d}M((Mf)^{1-\d}w)dx\\
&\le& 2\int_{{\mathbb
R}^n}(Mf)^{\d}M\big((Mf)^{1-\d}w\big)\,dx.
\end{eqnarray*}

\section{Haar shift operators}
We recall briefly main definitions concerning Haar shift operators. For more details we refer to \cite{H,HLMORSU,HPTV}.

\begin{definition}\label{hf}
We say that $h_Q$ is a Haar function on a cube $Q\in {\mathscr{D}}$ if
\begin{enumerate}
\renewcommand{\labelenumi}{(\roman{enumi})}
\item
$h_Q$ is a function supported on $Q$, and is constant on the
children of $Q$;
\item $\int h_Q=0$;
\end{enumerate}

We say that $h_Q$ is a generalized Haar function if it is a linear
combination of a Haar function on $Q$ and $\chi_Q$ (in other words,
only condition (i) above is satisfied).
\end{definition}

\begin{definition}
Given a general dyadic grid ${\mathscr{D}}$, $(m,k)\in {\mathbb
Z}_+^2$, and $Q\in {\mathscr{D}}$, set
$${\mathbb S}_Qf(x)=\sum_{{Q',Q''\in
{\mathscr{D}},Q',Q''\subset Q}\atop
{\ell(Q')=2^{-m}\ell(Q),\ell(Q'')=2^{-k}\ell(Q)}}\frac {\langle
f,h_{Q'}^{Q''}\rangle}{|Q|}h_{Q''}^{Q'}(x),
$$
where $h_{Q'}^{Q''}$ is a (generalized) Haar function on $Q'$, and
$h_{Q''}^{Q'}$ is one on $Q''$ such that
$$\|h_{Q'}^{Q''}\|_{L^{\infty}}\|h_{Q''}^{Q'}\|_{L^{\infty}}\le 1.$$
We say that ${\mathbb S}$ is a (generalized) Haar shift
operator of complexity type $(m,k)$ if
$${\mathbb S}f(x)={\mathbb S}_{{\mathscr{D}}}^{m,k}f(x)=
\sum_{Q\in {\mathscr{D}}}{\mathbb S}_Qf(x).$$
The number $\kappa=\max(m,k,1)$ is called the complexity of ${\mathbb S}$.
\end{definition}

Also, it is assumed by the definition the $L^2$ boundedness of the generalized Haar shift operator
(for the usual Haar shift this follows automatically from its properties).

\begin{definition} Given a generalized Haar shift ${\mathbb S}$, define its
associated maximal truncations by
$${\mathbb S}_{\natural}f(x)=\sup_{0<\e\le v<\infty}|{\mathbb S}_{\e,v}f(x)|,$$
where
$${\mathbb S}_{\e,v}f(x)=\sum_{Q\in {\mathscr{D}}:\e\le{\ell_Q}\le v}{\mathbb S}_Qf(x).$$
\end{definition}

The importance of the defined objects follows from the following result proved by T. Hyt\"onen \cite{H}
and simplified in \cite{HPTV}.

\begin{theorem}\label{repr} Let $T$ be a Calder\'on-Zygmund operator
which satisfies the standard estimates with $\d\in (0,1]$. Then for
all bounded and compactly supported functions $f$ and $g$,
$$
\langle g,Tf\rangle=c(T,n){\mathbb
E}_{\mathscr{D}}\sum_{k,m=0}^{\infty}2^{-(m+k)\d/2}\langle
g,{\mathbb S}_{{\mathscr{D}}}^{m,k}f\rangle,
$$
where ${\mathbb E}_{\mathscr{D}}$ is the expectation with respect to
a probability measure on the space of all general dyadic grids.
\end{theorem}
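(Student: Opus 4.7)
The plan is a random dyadic lattice argument in the Nazarov--Treil--Volberg style combined with a bilinear Haar expansion. First I would introduce a probability measure on the space of general dyadic grids by parametrizing $\mathscr{D}=\mathscr{D}^{\omega}$ through an independent, uniformly distributed random shift $\omega=(\omega_j)_{j\in \mathbb{Z}}\in(\{0,1\}^n)^{\mathbb{Z}}$, where $\omega_j$ determines the translation of the grid at scale $2^j$. Two features of this construction will be exploited: shifts at different scales are independent, and the position of a fixed cube $Q$ relative to its larger ancestors is independent of the choice of its subdivisions.

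For each realization of $\mathscr{D}$, I would expand both $f$ and $g$ in the (generalized) Haar basis adapted to $\mathscr{D}$ and rewrite
\begin{equation*}
\langle g,Tf\rangle=\sum_{Q',Q''\in \mathscr{D}}\langle T\Delta_{Q'}f,\Delta_{Q''}g\rangle,
\end{equation*}
grouping the pairs $(Q',Q'')$ by their relative geometry: if $Q$ is the minimal element of $\mathscr{D}$ containing both, write $\ell(Q')=2^{-m}\ell(Q)$ and $\ell(Q'')=2^{-k}\ell(Q)$. For fixed $(m,k)$ the partial sum assembles exactly into an operator of the form $\mathbb{S}_{\mathscr{D}}^{m,k}$ of Definition~3.2, once the coefficient $\langle T\Delta_{Q'}f,\Delta_{Q''}g\rangle/|Q|$ is identified as a pairing against generalized Haar functions $h_{Q'}^{Q''}$, $h_{Q''}^{Q'}$.

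The key technical step, and the main obstacle, is the good/bad cube decomposition. A cube $P\in \mathscr{D}$ is called good if it stays at a scale-appropriate distance from the boundary of every strictly larger grid cube; by the independence built into $\mathscr{D}^{\omega}$, the probability of badness is bounded by some $\varepsilon<1$ uniformly in $P$. A symmetrization over which of $Q',Q''$ is the smaller shows that the expected contribution of pairs whose smaller member is bad vanishes under $\mathbb{E}_{\mathscr{D}}$, so only good pairs survive. The goodness separation is precisely what is needed to apply the Hölder smoothness condition~(ii) to $\langle T\Delta_{Q'}f,\Delta_{Q''}g\rangle$; the resulting gain is a factor $(2^{-m}\wedge 2^{-k})^{\delta}$ which, after symmetrization, yields the advertised decay $2^{-(m+k)\delta/2}$ and simultaneously guarantees the normalization $\|h_{Q'}^{Q''}\|_{\infty}\|h_{Q''}^{Q'}\|_{\infty}\le 1$.

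The most delicate bookkeeping concerns the diagonal and paraproduct contributions---pairs where $Q'\subseteq Q''$ or $Q''\subseteq Q'$, including the coincident case $Q'=Q''$---which are absorbed into the representation by allowing the $h_{Q'}^{Q''}$'s to be \emph{generalized} Haar functions, cancellative in only one slot; this is the HPTV simplification which avoids a separate paraproduct analysis. Once this is in place one has an absolutely convergent double series over $m,k\ge 0$, and the constant $c(T,n)$ arises from the kernel bounds of~$K$ together with the inverse of the probability that a cube is good.
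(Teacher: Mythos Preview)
The paper does not prove Theorem~\ref{repr} at all: it is quoted as a known result of T.~Hyt\"onen \cite{H}, with the simplified version in \cite{HPTV}, and is used as a black box. So there is no ``paper's own proof'' to compare against.

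That said, your outline is a faithful sketch of the argument actually given in \cite{H,HPTV}: the random-lattice parametrization, the bilinear Haar expansion of $\langle g,Tf\rangle$, the good/bad decomposition with the independence-based probability estimate, the smoothness gain $2^{-(m+k)\delta/2}$ from the kernel condition~(ii) on good pairs, and the absorption of paraproduct terms via generalized Haar functions. This is correct in spirit and matches the references the paper cites. One technical point you gloss over: the claim that ``the expected contribution of pairs whose smaller member is bad vanishes under $\mathbb{E}_{\mathscr{D}}$'' is not literally that it vanishes, but rather that one can renormalize by $(1-\pi_{\mathrm{bad}})^{-1}$ after restricting to good cubes, using that goodness of a cube is independent of its position --- this is the NTV averaging trick, and it is where the constant you mention at the end comes from. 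Otherwise the skeleton is right; filling it in to a complete proof is essentially rewriting \cite{HPTV}.
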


By means of Theorem \ref{repr}, it was deduced in \cite{HLMORSU} the following estimate.

\begin{prop}\label{point} We have the pointwise bound
$$T_{\natural}f(x)\le c(T,n)\Big(Mf(x)+{\mathbb
E}_{\mathscr{D}}\sum_{k,m=0}^{\infty}2^{-(m+k)\d/2}({\mathbb S}_{{\mathscr{D}}}^{m,k})_{\natural}f(x)\Big).$$
\end{prop}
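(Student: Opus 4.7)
The plan is to apply Theorem \ref{repr} not to $T$ itself but to smoothly truncated versions of $T$, and then take a supremum over the truncation parameters.

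First, for each $0<\e<\nu<\infty$ I would fix a smooth radial cutoff $\f_{\e,\nu}$ equal to $1$ on $[\e,\nu]$ and supported in $[\e/2,2\nu]$, with the usual derivative bounds $|\f'|\lesssim 1/\e$ near $\e$ and $|\f'|\lesssim 1/\nu$ near $\nu$. Let $T^{\e,\nu}$ denote the operator with kernel $K(x,y)\f_{\e,\nu}(|x-y|)$. A standard pointwise estimate, splitting the difference on dyadic shells around $|x-y|\sim\e$ and $|x-y|\sim\nu$ and using $|K|\le c/|x-y|^n$, yields
$$\Big|T^{\e,\nu}f(x)-\int_{\e<|x-y|<\nu}K(x,y)f(y)\,dy\Big|\le c(T,n)Mf(x)$$
uniformly in $(\e,\nu)$. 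Moreover $T^{\e,\nu}$ is itself a Calder\'on-Zygmund operator of the same type, with $L^2$ norm and kernel constants uniform in $(\e,\nu)$.

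Next I would apply Theorem \ref{repr} to $T^{\e,\nu}$, producing a representation
$$T^{\e,\nu}f(x)=c(T,n)\,{\mathbb E}_{{\mathscr{D}}}\sum_{k,m=0}^{\infty}2^{-(m+k)\d/2}\,\wid{\mathbb S}^{m,k}_{{\mathscr{D}},\e,\nu}f(x),$$
with constants uniform in $(\e,\nu)$. The crucial step is to inspect the explicit construction in \cite{H,HPTV} and observe that, since the kernel of $T^{\e,\nu}$ is supported in the scale band $|x-y|\in[\e/2,2\nu]$, the Haar shift $\wid{\mathbb S}^{m,k}_{{\mathscr{D}},\e,\nu}$ receives contributions only from cubes $Q\in{\mathscr{D}}$ whose sidelength lies in a comparable band $\ell_Q\in[\e',\nu']$ with $\e'\asymp\e$ and $\nu'\asymp\nu$. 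Consequently
$$|\wid{\mathbb S}^{m,k}_{{\mathscr{D}},\e,\nu}f(x)|\le ({\mathbb S}^{m,k}_{{\mathscr{D}}})_{\natural}f(x)$$
by the very definition of the maximal truncation.

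Combining these two bounds, for every admissible $(\e,\nu)$ one obtains
$$\Big|\int_{\e<|x-y|<\nu}K(x,y)f(y)\,dy\Big|\le c(T,n)\Big(Mf(x)+{\mathbb E}_{{\mathscr{D}}}\sum_{k,m=0}^{\infty}2^{-(m+k)\d/2}\,({\mathbb S}^{m,k}_{{\mathscr{D}}})_{\natural}f(x)\Big),$$
and since the right-hand side no longer depends on $(\e,\nu)$, taking the supremum on the left yields the proposition. The main obstacle is the scale-locality step: one has to verify that truncating the physical kernel to an annulus corresponds, uniformly in the random dyadic grid, to truncating the dyadic sum defining ${\mathbb S}^{m,k}_{{\mathscr{D}}}$. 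This requires going back into the construction of \cite{H,HPTV} and tracking precisely which scales enter each Haar summand; the approximation step for the sharp cutoff and the final passage to the supremum are then routine.
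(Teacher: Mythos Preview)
The paper does not actually prove this proposition; it is quoted as a result from \cite{HLMORSU}, so there is no in-paper argument to compare your sketch against directly.

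Your overall strategy---replace the sharp truncation by a smooth one at the cost of $Mf$, then invoke the representation theorem---is the natural one and is indeed the route taken in \cite{HLMORSU}. However, the step you flag as ``the main obstacle'' is a genuine gap as written. Applying Theorem~\ref{repr} to $T^{\e,\nu}$ produces Haar shifts $\wid{\mathbb S}^{m,k}_{{\mathscr{D}},\e,\nu}$ whose coefficients are built from the kernel $K\f_{\e,\nu}$, not from $K$. These are \emph{not} partial sums of the shifts ${\mathbb S}^{m,k}_{{\mathscr{D}}}$ associated with $T$: at scales near the edges of the annulus the two families of coefficients simply differ, so the inequality $|\wid{\mathbb S}^{m,k}_{{\mathscr{D}},\e,\nu}f|\le({\mathbb S}^{m,k}_{{\mathscr{D}}})_{\natural}f$ does not follow from the definition of the maximal truncation. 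The scale-locality you want is only approximate, and the discrepancy has to be absorbed into $Mf$ as well.

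A cleaner way to organize the argument (and closer to what \cite{HLMORSU} does) is to start from the single representation of $T$ given by Theorem~\ref{repr}, truncate the dyadic sum $\sum_{Q\in{\mathscr{D}}}{\mathbb S}_Q$ to a scale band, and show that this dyadic truncation differs from the corresponding smooth physical truncation $T^{\e,\nu}f$ by at most $c(T,n)Mf$. That comparison is where the real work lies, and it uses the explicit form of the ${\mathbb S}_Q$ in the construction; once established, the maximal truncation of ${\mathbb S}^{m,k}_{{\mathscr{D}}}$ enters directly, without having to identify two different families of shifts.
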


\section{A ``local mean oscillation decomposition"}

\begin{definition} The non-increasing rearrangement of a measurable function $f$ on ${\mathbb R}^n$ is defined by
$$f^*(t)=\inf\{\a>0:|\{x\in {\mathbb R}^n:|f(x)|<\a\}|<t\}\quad(0<t<\infty).$$
\end{definition}

\begin{definition}
Given a measurable function $f$ on ${\mathbb R}^n$ and a cube $Q$,
the local mean oscillation of $f$ on $Q$ is defined by
$$\o_{\la}(f;Q)=\inf_{c\in {\mathbb R}}
\big((f-c)\chi_{Q}\big)^*\big(\la|Q|\big)\quad(0<\la<1).$$
\end{definition}

\begin{definition}
By a median value of $f$ over $Q$ we mean a possibly nonunique, real
number $m_f(Q)$ such that
$$\max\big(|\{x\in Q: f(x)>m_f(Q)\}|,|\{x\in Q: f(x)<m_f(Q)\}|\big)\le |Q|/2.$$
\end{definition}

It is easy to see that the set of all median values of $f$ is either one point or the closed interval. In the latter case we will assume for
the definiteness that $m_f(Q)$ is the {\it maximal} median value. Observe that it follows from the definitions that
\begin{equation}\label{pro1}
|m_f(Q)|\le (f\chi_Q)^*(|Q|/2).
\end{equation}
This estimate implies
\begin{equation}\label{pro2}
((f-m_f(Q))\chi_Q)^*(\la|Q|)\le 2\o_{\la}(f;Q)\quad(0<\la\le 1/2).
\end{equation}
We also mention that (cf. \cite[Lemma 2.2]{F2})
\begin{equation}\label{pro3}
\lim_{|Q|\to 0,Q\ni x}m_f(Q)=f(x)\quad\text{for a.e.}\,\,\, x\in
{\mathbb R}^n.
\end{equation}

Given a cube $Q_0$, denote by ${\mathcal D}(Q_0)$ the set of all
dyadic cubes with respect to $Q_0$. The dyadic local sharp maximal
function $M^{\#,d}_{\la;Q_0}f$ is defined by
$$M^{\#,d}_{\la;Q_0}f(x)=\sup_{x\in Q'\in
{\mathcal D}(Q_0)}\o_{\la}(f;Q').$$

The following theorem was proved in \cite{L1}.

\begin{theorem}\label{decom} Let $f$ be a measurable function on
${\mathbb R}^n$ and let $Q_0$ be a fixed cube. Then there exists a
(possibly empty) sparse family of cubes $Q_j^k\in {\mathcal D}(Q_0)$
such that for a.e. $x\in Q_0$,
$$
|f(x)-m_f(Q_0)|\le
4M_{1/4;Q_0}^{\#,d}f(x)+4\sum_{k,j}
\o_{\frac{1}{2^{n+2}}}(f;(Q_j^k)^{(1)})\chi_{Q_j^k}(x).
$$
\end{theorem}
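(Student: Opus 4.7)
The plan is to build the sparse family by an iterative Calder\'on--Zygmund stopping-time argument driven by median jumps. Set $Q_1^0:=Q_0$ and, given the cubes $\{Q_j^k\}_j$ already selected at level $k$, let $\{Q_i^{k+1}\}_i$ consist of the maximal cubes $P\in {\mathcal D}(Q_j^k)\setminus\{Q_j^k\}$ (over all $j$) for which
\[|m_f(P)-m_f(Q_j^k)| > 2\o_{1/4}(f;Q_j^k).\]
Sparsity within each $Q_j^k$ follows from two observations. On every selected $Q_i^{k+1}\subseteq Q_j^k$, (\ref{pro1}) applied to $f-m_f(Q_j^k)$ implies $|Q_i^{k+1}\cap E_j^k|\ge|Q_i^{k+1}|/2$ for the exceptional set
\[E_j^k := \{x\in Q_j^k: |f(x)-m_f(Q_j^k)|>2\o_{1/4}(f;Q_j^k)\},\]
while (\ref{pro2}) with $\la=1/4$ gives $|E_j^k|\le|Q_j^k|/4$. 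Disjointness of the maximal cubes therefore yields $\sum_{Q_i^{k+1}\subseteq Q_j^k}|Q_i^{k+1}|\le 2|E_j^k|\le|Q_j^k|/2$.

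The complementary ``good-set'' estimate is a pointwise bound on $Q_j^k\setminus\bigcup_iQ_i^{k+1}$: by maximality, any dyadic $R\in{\mathcal D}(Q_j^k)$ containing a point $x$ there must fail the stopping criterion (otherwise $R$ would sit inside some maximal selected cube that contains $x$), hence $|m_f(R)-m_f(Q_j^k)|\le 2\o_{1/4}(f;Q_j^k)$. Letting $|R|\to 0$ and invoking (\ref{pro3}) gives, for a.e.\ such $x$,
\[|f(x)-m_f(Q_j^k)| \le 2\o_{1/4}(f;Q_j^k).\]

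The sparsity $|\O_k|\le 2^{-k}|Q_0|$ forces, for a.e.\ $x\in Q_0$, a maximal finite chain of selected cubes $Q_0=P_0\supsetneq P_1\supsetneq\cdots\supsetneq P_K\ni x$ with $x\notin\O_{K+1}$. Telescoping gives
\[|f(x)-m_f(Q_0)|\le |f(x)-m_f(P_K)| + \sum_{\ell=0}^{K-1}|m_f(P_{\ell+1})-m_f(P_\ell)|,\]
in which the first summand is at most $2\o_{1/4}(f;P_K)$ by the good-set bound. Each middle summand splits through the dyadic parent $P_{\ell+1}^{(1)}\subseteq P_\ell$: maximality of $P_{\ell+1}$ forces $P_{\ell+1}^{(1)}$ to fail the stopping criterion (or else $P_{\ell+1}^{(1)}=P_\ell$, trivially), giving $|m_f(P_{\ell+1}^{(1)})-m_f(P_\ell)|\le 2\o_{1/4}(f;P_\ell)$, while (\ref{pro1}) together with the size identity $|P_{\ell+1}|/2 = |P_{\ell+1}^{(1)}|/2^{n+1}$ yields $|m_f(P_{\ell+1})-m_f(P_{\ell+1}^{(1)})| \le 2\o_{1/2^{n+2}}(f;P_{\ell+1}^{(1)})$.

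The decisive step --- explaining why both the parent $(Q_j^k)^{(1)}$ and the exponent $1/2^{n+2}$ appear in the statement --- is the trivial comparison $\o_{1/4}(f;P_\ell) \le \o_{1/2^{n+2}}(f;P_\ell^{(1)})$ for $\ell\ge 1$, which holds because $|P_\ell^{(1)}|/2^{n+2}=|P_\ell|/4$. This folds every $\o_{1/4}(f;P_\ell)$ with $\ell\ge 1$ into the sparse oscillation sum $\sum_{k,j}\o_{1/2^{n+2}}(f;(Q_j^k)^{(1)})\chi_{Q_j^k}(x)$ (using $\chi_{P_\ell}(x)=1$), whereas the single top-level remnant $\o_{1/4}(f;Q_0)$ --- and the whole right-hand side when $K=0$ --- is majorized by $M^{\#,d}_{1/4;Q_0}f(x)$. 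Collecting terms produces the claimed inequality. The absorption is exactly where the main obstacle lies: without the passage to the parent one would end up with $K\cdot M^{\#,d}_{1/4;Q_0}f(x)$, unbounded in the chain length.
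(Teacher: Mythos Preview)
Your argument is correct. The stopping rule on median jumps, the sparsity estimate via $|Q_i^{k+1}\cap E_j^k|\ge |Q_i^{k+1}|/2$ and $|E_j^k|\le|Q_j^k|/4$, the good-set bound from (\ref{pro3}), and the telescoping-plus-parent comparison all go through as stated. Your final collection actually gives $2M^{\#,d}_{1/4;Q_0}f(x)$ rather than $4$, which is fine. One small bookkeeping point: the sparse family in the statement should be $\{Q_j^k\}_{k\ge1}$ (excluding $Q_1^0=Q_0$), so that every $(Q_j^k)^{(1)}$ lies in ${\mathcal D}(Q_0)$; your absorption of the $\ell=0$ remnant into the maximal term already handles this.

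Note that the paper does not prove Theorem~\ref{decom} here; it is quoted from \cite{L1}. The paper does, however, prove the variant Theorem~\ref{decom1} via Lemma~\ref{ingr}, and remarks that the original proof of Theorem~\ref{decom} ``follows exactly the same lines.'' That approach differs from yours in the stopping mechanism: rather than stopping on the jump $|m_f(P)-m_f(Q_j^k)|$, it introduces the auxiliary maximal function ${\mathfrak m}_{Q_0}f_1(x)=\sup_{x\in Q}\max_{Q_i:Q_i^{(1)}=Q}|m_{f_1}(Q_i)|$ and takes $\{Q_j^1\}$ to be the maximal cubes where ${\mathfrak m}_{Q_0}f_1$ exceeds the fixed level $(f_1\chi_{Q_0})^*(\la_n|Q_0|)$. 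This yields, at each step, an exact identity $f-m_f(Q_0)=g_1+\sum_j\a_{j,1}\chi_{Q_j^1}+\sum_j(f-m_f(Q_j^1))\chi_{Q_j^1}$ with explicit control on $g_1$ and $\a_{j,1}$, which is then iterated. Your route is a bit more elementary---no auxiliary maximal function, just direct median comparison---at the cost of producing an inequality rather than an identity at each stage; for the theorem as stated the two are equivalent.
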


Here we will prove a similar result with the local mean oscillations taken over the cubes $Q_j^k$
instead of $(Q_j^k)^{(1)}$.

\begin{theorem}\label{decom1} Let $f$ be a measurable function on
${\mathbb R}^n$ and let $Q_0$ be a fixed cube. Then there exists a
(possibly empty) sparse family of cubes $Q_j^k\in {\mathcal D}(Q_0)$
such that for a.e. $x\in Q_0$,
$$
|f(x)-m_f(Q_0)|\le
4M_{\frac{1}{2^{n+2}};Q_0}^{\#,d}f(x)+2\sum_{k,j}
\o_{\frac{1}{2^{n+2}}}(f;Q_j^k)\chi_{Q_j^k}(x).
$$
\end{theorem}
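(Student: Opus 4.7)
My plan is to imitate the construction of Theorem~\ref{decom} from~\cite{L1} but with the stopping thresholds chosen so that the Calder\'on--Zygmund inequality delivers control on the \emph{selected cube itself} rather than on its parent. Set $\la=1/2^{n+2}$. Starting from $Q_0$, I proceed inductively: having fixed a cube $P$ in the family (with $P=Q_0$ at the first step), consider
$$E(P)=\{x\in P:\ |f(x)-m_f(P)|>2\o_\la(f;P)\},$$
which by~\eqref{pro2} satisfies $|E(P)|\le\la|P|$. I take the ``children'' of $P$ in the family to be the maximal cubes $Q\in\mathcal{D}(P)$ with $|E(P)\cap Q|>(1/2^{n+1})|Q|$.

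Two properties of this selection are crucial. First, sparseness: each selected child satisfies $|Q|<2^{n+1}|E(P)\cap Q|$, so
$$\sum_{Q}|Q|\le 2^{n+1}|E(P)|\le 2^{n+1}\la|P|=|P|/2.$$
Second, the standard Calder\'on--Zygmund inequality gives the tight bound $|E(P)\cap Q|\le|Q|/2$ for each selected $Q$. Combined with~\eqref{pro1} applied to $f-m_f(P)$, this yields the jump estimate
$$|m_f(Q)-m_f(P)|\le\bigl((f-m_f(P))\chi_Q\bigr)^*(|Q|/2)\le 2\o_\la(f;P),$$
whose right-hand side is an oscillation over $P$ (the ancestor), \emph{not} over the parent of $Q$ as in~\cite{L1}. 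A standard Lebesgue-differentiation argument on $\mathcal{D}(P)$ further shows that $E(P)$ minus the union of its selected children has measure zero, hence for a.e.~$x\in P$ outside those children one has $|f(x)-m_f(P)|\le 2\o_\la(f;P)$.

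To finish, I telescope along the chain of selected cubes $Q_0\supset Q_{j_1}^1\supset Q_{j_2}^2\supset\cdots$ containing a generic $x\in Q_0$. If the chain terminates at some level $k$, then
$$|f(x)-m_f(Q_0)|\le|f(x)-m_f(Q_{j_k}^k)|+\sum_{l=1}^{k}|m_f(Q_{j_l}^l)-m_f(Q_{j_{l-1}}^{l-1})|,$$
and every summand is estimated by the jump bound above (the first via the ``outside'' case applied to $P=Q_{j_k}^k$). If the chain is infinite, the sidelengths shrink to $0$ and~\eqref{pro3} gives $m_f(Q_{j_k}^k)\to f(x)$, so the same telescoping holds in the limit. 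Either way,
$$|f(x)-m_f(Q_0)|\le 2\o_\la(f;Q_0)+2\sum_{l\ge 1}\o_\la(f;Q_{j_l}^l),$$
which is majorised by the right-hand side of the theorem because $\o_\la(f;Q_0)\le M^{\#,d}_{\la;Q_0}f(x)$ and the remaining sum is bounded by $\sum_{k,j}\o_\la(f;Q_j^k)\chi_{Q_j^k}(x)$.

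The main subtlety compared with Theorem~\ref{decom} is the tension between sparseness and obtaining the cube-itself oscillation: one needs simultaneously $\sum_{Q}|Q|\le|P|/2$ \emph{and} the ``half-cube'' bound $|E(P)\cap Q|\le|Q|/2$ on each selected child, since it is precisely the latter that allows~\eqref{pro1} to convert the CZ information into a median jump controlled by $\o_\la(f;P)$ rather than by the parent oscillation $\o_\la(f;P^{(1)})$. The balance between $\la=1/2^{n+2}$ and the stopping threshold $1/2^{n+1}$ is exactly what makes both requirements hold.
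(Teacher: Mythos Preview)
Your argument is correct. The overall architecture (iterated stopping-time selection inside $Q_0$, telescoping of medians along the resulting chain, and (\ref{pro3}) for the infinite-chain case) matches the paper's, but your \emph{stopping rule} is genuinely different from the one in Lemma~\ref{ingr}. The paper introduces an auxiliary maximal function
\[
{\mathfrak m}_{Q_0}f_1(x)=\sup_{x\in Q\in{\mathcal D}(Q_0)}\ \max_{Q_i:\,Q_i^{(1)}=Q}|m_{f_1}(Q_i)|
\]
and takes the $Q_j^1$ to be the maximal cubes where ${\mathfrak m}_{Q_0}f_1$ exceeds the threshold; maximality then directly forces $|m_{f_1}(Q_j^1)|$ to lie below the threshold, and sparseness is deduced afterwards from~(\ref{pro1}). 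You instead run a classical Calder\'on--Zygmund density stopping on the level set $E(P)$, obtain sparseness immediately from $|E(P)|\le\la|P|$, and then recover the median jump $|m_f(Q)-m_f(P)|\le 2\o_\la(f;P)$ from the CZ doubling bound $|E(P)\cap Q|\le|Q|/2$ combined with~(\ref{pro1}). Both devices are tailored precisely to land on $\o_\la(f;P)$ rather than $\o_\la(f;P^{(1)})$, which is the whole point of Theorem~\ref{decom1} versus Theorem~\ref{decom}. Your route is arguably the more elementary one, since it avoids the auxiliary operator~${\mathfrak m}$ and stays entirely within the standard CZ framework; the paper's route has the minor advantage that the jump estimate is immediate from the stopping condition itself. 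Either way, the constants and the sparse family produced are of the same quality.
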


The key element of the proof is the following.

\begin{lemma}\label{ingr} There exists a (possibly empty) collection of pairwise disjoint cubes
$\{Q_j^1\}\in {\mathcal D}(Q_0)$ such that $\sum_j|Q_j^1|\le \frac{1}{2}|Q_0|$
and for a.e. $x\in Q_0$,
\begin{equation}\label{1st}
f-m_f(Q_0)=g_1+\sum_j\a_{j,1}\chi_{Q_j^1}+\sum_j(f-m_f(Q_j^1))\chi_{Q_j^1},
\end{equation}
where
$
|g_1|\le 2M_{\frac{1}{2^{n+2}};Q_0}^{\#,d}f$ for a.e.
$x\in Q_0\setminus \cup_jQ_j^1$
and the numbers $\a_{j,1}$ satisfy
$|a_{j,1}|\le 2\o_{\frac{1}{2^{n+2}}}(f;Q_0).$
\end{lemma}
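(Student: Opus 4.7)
The plan is a Calder\'on--Zygmund type stopping-time inside $Q_0$ applied to the exceptional set
$$E=\{y\in Q_0:|f(y)-m_f(Q_0)|>\a\},\qquad \a=2\o_{\la_0}(f;Q_0),\ \la_0=\tfrac{1}{2^{n+2}}.$$
By (\ref{pro2}) we have $|E|\le\la_0|Q_0|$. I would let $\{Q_j^1\}$ be the family of maximal dyadic cubes $Q\in{\mathcal D}(Q_0)$ with $|Q\cap E|>\frac{1}{2^{n+1}}|Q|$; these are pairwise disjoint, and the selection inequality together with $\bigcup_j Q_j^1\subset Q_0$ yields
$$\sum_j|Q_j^1|<2^{n+1}\sum_j|Q_j^1\cap E|\le 2^{n+1}|E|\le\tfrac12|Q_0|,$$
which is the required size bound. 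Note that $Q_0$ itself never satisfies the selection criterion, since $|E|\le\la_0|Q_0|<\tfrac{1}{2^{n+1}}|Q_0|$.

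Defining $\a_{j,1}=m_f(Q_j^1)-m_f(Q_0)$ and $g_1=(f-m_f(Q_0))\chi_{Q_0\setminus\bigcup_jQ_j^1}$ makes (\ref{1st}) an algebraic identity a.e.\ on $Q_0$, because on each $Q_j^1$ one has $\a_{j,1}+(f-m_f(Q_j^1))=f-m_f(Q_0)$. For the pointwise bound on $g_1$, at a.e.\ point $x$ of $E$ the dyadic Lebesgue density of $E$ equals~$1$, so sufficiently small dyadic cubes $Q\ni x$ satisfy the selection criterion and are therefore contained in some $Q_j^1$; this forces a.e.\ $x\in Q_0\setminus\bigcup_jQ_j^1$ to lie outside $E$, and consequently
$$|g_1(x)|=|f(x)-m_f(Q_0)|\le\a=2\o_{\la_0}(f;Q_0)\le 2M^{\#,d}_{\la_0;Q_0}f(x),$$
the last step using that $Q_0\in{\mathcal D}(Q_0)$ contains $x$.

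To estimate $|\a_{j,1}|$, I would use the maximality of $Q_j^1$: its dyadic parent $\widehat{Q}_j^1\subset Q_0$ fails the selection criterion (either because it is a proper ancestor of $Q_j^1$ or because it equals $Q_0$), so
$$|Q_j^1\cap E|\le|\widehat{Q}_j^1\cap E|\le\tfrac{1}{2^{n+1}}|\widehat{Q}_j^1|=\tfrac12|Q_j^1|.$$
Thus $|f-m_f(Q_0)|\le\a$ on at least half of $Q_j^1$, and applying (\ref{pro1}) to $f-m_f(Q_0)$ on $Q_j^1$ yields
$$|\a_{j,1}|=|m_{f-m_f(Q_0)}(Q_j^1)|\le\bigl((f-m_f(Q_0))\chi_{Q_j^1}\bigr)^*(|Q_j^1|/2)\le\a=2\o_{\la_0}(f;Q_0),$$
which is exactly the desired bound.

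The main point requiring care is the calibration of the two constants: the oscillation level $\la_0=1/2^{n+2}$ is chosen to be half the selection density $1/2^{n+1}$, which is precisely what is needed to make all three conclusions --- the size bound on $\sum_j|Q_j^1|$, the $g_1$ estimate, and the $\a_{j,1}$ estimate --- come out with the claimed constants simultaneously. Beyond that the argument is a standard application of the dyadic Lebesgue differentiation theorem together with properties (\ref{pro1}) and (\ref{pro2}) of the median.
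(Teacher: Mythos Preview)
Your proof is correct and reaches the same conclusion with the same constants, but the stopping rule you use is genuinely different from the paper's. You run a density-type Calder\'on--Zygmund selection: $Q_j^1$ are the maximal dyadic subcubes where the exceptional set $E=\{|f-m_f(Q_0)|>2\o_{\la_0}(f;Q_0)\}$ has relative measure exceeding $1/2^{n+1}$; the size bound is then immediate from disjointness, the $g_1$ estimate comes from the dyadic Lebesgue density theorem (a.e.\ point of $E$ is eventually captured), and the $\a_{j,1}$ bound follows by looking at the parent and applying (\ref{pro1}). The paper instead stops on a median-based criterion: it introduces the auxiliary maximal function $\mathfrak{m}_{Q_0}f_1(x)=\sup_{x\in Q}\max_{Q_i:Q_i^{(1)}=Q}|m_{f_1}(Q_i)|$ and takes $Q_j^1$ to be the maximal cubes where this exceeds $(f_1\chi_{Q_0})^*(\la_n|Q_0|)$. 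In that version the bound $|\a_{j,1}|\le 2\o_{\la_n}(f;Q_0)$ falls out directly from maximality (the parent fails, so all its children---in particular $Q_j^1$---have small median), while the size bound requires a short computation via (\ref{pro1}); the roles of ``easy'' and ``computed'' conclusions are essentially swapped relative to your argument. Your approach is perhaps closer to the classical Calder\'on--Zygmund decomposition and needs only the Lebesgue density theorem plus (\ref{pro1})--(\ref{pro2}); the paper's approach avoids the density argument for $g_1$ but needs the slightly less familiar maximal function $\mathfrak{m}_{Q_0}$. Both produce possibly different families $\{Q_j^1\}$, yet either family works for the lemma.
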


Having this lemma established, the proof of
Theorem \ref{decom1} follows exactly the same lines as the proof of Theorem \ref{decom}. Therefore, we only outline briefly main details.

\begin{proof}[Proof of Theorem \ref{decom1}]
Iterating (\ref{1st}) for each $Q_j^1$ and for every subsequent cube, we get that for a.e. $x\in Q_0$,
\begin{equation}\label{summ}
f-m_f(Q_0)=g+\sum_j\a_{j,1}\chi_{Q_j^1}+\sum_{k=2}^{\infty}\,\,\sum_{i:Q_i^{k-1}\cap\Omega_{k}\not=\emptyset}\,\,\sum_{j:Q_j^k\subset Q_i^{k-1}}\alpha_{j,k}^{(i)}\chi_{Q_j^{k}},
\end{equation}
where $\Omega_{k}=\cup_jQ_j^{k}$, and the family $\{Q_j^{k}\}$ is sparse. Moreover,
$$|g|\le 2M_{\frac{1}{2^{n+2}};Q_0}^{\#,d}f\quad\text{and}\quad |\alpha_{j,k}^{(i)}|\le 2\o_{\frac{1}{2^{n+2}}}(f;Q_i^{k-1}).$$

The first sum in (\ref{summ}) is bounded by $2\o_{\frac{1}{2^{n+2}}}(f;Q_0)\le 2M_{\frac{1}{2^{n+2}};Q_0}^{\#,d}f$. Further,
$$\sum_{j:Q_j^k\subset Q_i^{k-1}}|\alpha_{j,k}^{(i)}|\chi_{Q_j^{k}}\le 2\o_{\frac{1}{2^{n+2}}}(f;Q_i^{k-1})\chi_{Q_i^{k-1}}.$$
Hence, the second sum in (\ref{summ}) is bounded by
$$2\sum_{k\ge 2}\sum_i\o_{\frac{1}{2^{n+2}}}(f;Q_i^{k-1})\chi_{Q_i^{k-1}}.$$
Combining the obtained estimates completes the proof.
\end{proof}

\begin{proof}[Proof of Lemma \ref{ingr}]
Set $f_1(x)=f(x)-m_f(Q_0)$ and
$$E_1=\{x\in Q_0:|f_1(x)|>(f_1\chi_{Q_0})^*(\la_n|Q_0|)\},$$
where $\la_n=\frac{1}{2^{n+2}}$. If $|E_1|=0$, then by (\ref{pro2}) we trivially have
$$|f(x)-m_f(Q_0)|\le 2\o_{\la_n}(f;Q_0)\le 2M^{\#,d}_{\la_n;Q_0}f(x)
\quad\text{for a.e.}\,\,x\in Q_0.$$

Assume therefore that $|E_1|>0$. Let
$$
{\mathfrak{m}}_{Q_0}f_1(x)=\sup_{x\in Q\in {\mathcal D}(Q_0)}\max_{Q_i:Q_i^{(1)}=Q}|m_{f_1}(Q_i)|
$$
(the maximum is taken over $2^n$ dyadic children of $Q$). Consider the set
$$\O_1=\{x\in Q_0:{\mathfrak{m}}_{Q_0}f_1(x)>(f_1\chi_{Q_0})^*(\la_n|Q_0|)\}.$$
By (\ref{pro3}), ${\mathfrak{m}}_{Q_0}f_1(x)\ge |f_1(x)|$ a.e., and hence $|\O_1|\ge |E_1|>0$.
We can write $\O_1=\cup Q_j^1$, where $Q_j^1$ are pairwise disjoint cubes from ${\mathcal D}(Q_0)$
with the property that they are maximal such that
\begin{equation}\label{pr}
\max_{Q_i:Q_i^{(1)}=Q_j^1}|m_{f_1}(Q_i)|>(f_1\chi_{Q_0})^*(\la_n|Q_0|).
\end{equation}
In particular, this means that each $Q_j^1$ satisfies
$$
|m_{f_1}(Q_j^1)|\le (f_1\chi_{Q_0})^*(\la_n|Q_0|)\le 2\o_{\la_n}(f;Q_0).
$$

Since $m_{f_1}(Q_j^1)=m_{f}(Q_j^1)-m_f(Q_0)$, we have
$$
f-m_f(Q_0)=f_1\chi_{Q_0\setminus \O_1}+\sum_jm_{f_1}(Q_j^1)\chi_{Q_j^1}+\sum_j(f-m_f(Q_j^1))\chi_{Q_j^1},
$$
which proves (\ref{1st}) with $g_1=f_1\chi_{Q_0\setminus \O_1}$ and $\a_{j,1}=m_{f_1}(Q_j^1)$.
By the above established properties we have that $g_1$ and $\a_{j,1}$ satisfy the statement of the lemma.

It remains to show that $|\O_1|\le |Q_0|/2$. If
$Q_i$ is a child of $Q$, then by (\ref{pro1}),
$$|m_f(Q_i)|\le (f\chi_{Q_i})^*(|Q_i|/2)\le (f\chi_Q)^*(|Q|/2^{n+1}).$$
Therefore, if (\ref{pr}) holds, then
$$(f_1\chi_{Q_0})^*(\la_n|Q_0|)<(f_1\chi_{Q_j^1})^*(|Q_j^1|/2^{n+1}).$$
Hence,
$$|\{x\in Q_j^1:|f_1(x)|>(f_1\chi_{Q_0})^*(\la_n|Q_0|)\}|\ge
|Q_j^1|/2^{n+1},$$ and thus
\begin{eqnarray*}
\frac{1}{2^{n+1}}\sum_j|Q_j^1|&\le& \sum_j |\{x\in
Q_j^1:|f_1(x)|>(f_1\chi_{Q_0})^*(\la_n|Q_0|)\}|\\
&\le& |\{x\in Q_0:|f_1(x)|>(f_1\chi_{Q_0})^*(\la_n|Q_0|)\}|\le \la_n|Q_0|,
\end{eqnarray*}
which completes the proof.
\end{proof}

\section{Proof of Theorem \ref{mainr}}
Taking into account Proposition \ref{point}, in order to prove Theorem \ref{mainr}, it suffices to show that
\begin{equation}\label{esn1}
\|Mf\|_{X}\le c(n)\|{\mathcal A}f\|_{X}\quad(f\ge 0)
\end{equation}
and
\begin{equation}\label{esn2}
\|({\mathbb S}_{{\mathscr{D}}}^{m,k})_{\natural}f\|_{X}\le c(n)\kappa^2\|{\mathcal A}|f|\|_{X}.
\end{equation}

Here and below, $\|{\mathcal A}_if\|_{X}$ is understood as $\displaystyle\sup_{{\mathscr{D}},{\mathcal S}}\|{\mathcal A}_{{\mathscr{D}},{\mathcal S},i}f\|_{X}$, where the supremum is taken over arbitrary dyadic grids ${\mathscr{D}}$ and sparse families ${\mathcal S}\in {\mathscr{D}}$.

\subsection{Banach function spaces}
For a general account of Banach function spaces we refer to
\cite[Ch. 1]{BS}. Here we mention only several facts which will be
used below.

The associate space $X'$ consists of measurable functions $f$ for which
$$\|f\|_{X'}=\sup_{\|g\|_{X}\le 1}\int_{{\mathbb R}^n}|f(x)g(x)|dx<\infty.$$
This definition implies the following H\"older inequality:
\begin{equation}\label{hol}
\int_{{\mathbb R}^n}|f(x)g(x)|dx\le \|f\|_{X}\|g\|_{X'}.
\end{equation}
Further \cite[p. 13]{BS},
\begin{equation}\label{dua}
\|f\|_{X}=\sup_{\|g\|_{X'}=1}\int_{{\mathbb R}^n}|f(x)g(x)|dx.
\end{equation}

By Fatou's lemma \cite[p. 5]{BS}, if $f_n\to f$ a.e., and if
$\displaystyle\liminf_{n\to\infty}\|f_n\|_{X}<\infty$, then $f\in
X$, and
\begin{equation}\label{fatou}
\|f\|_X\le \liminf_{n\to \infty}\|f_n\|_X.
\end{equation}

\subsection{Proof of (\ref{esn1})}
We shall use the well-known principle saying that in order to estimate the usual maximal operator it suffices to estimate the dyadic one.
This principle has several forms. We shall need the one attributed in the literature to M. Christ and, independently, to J.~Garnett and P. Jones. However, we have found it in a very clear form only in \cite[proof of Th. 1.10]{HP}.

\begin{prop}\label{prhp} There are $2^n$ dyadic grids ${\mathscr{D}}_{\a}$ such that for any cube $Q\subset {\mathbb R}^n$ there exists a cube $Q_{\a}\in {\mathscr{D}}_{\a}$
such that $Q\subset Q_{\a}$ and $\ell_{Q_{\a}}\le 6\ell_Q$.
\end{prop}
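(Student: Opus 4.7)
The approach is to construct the $2^n$ grids as Cartesian products of two shifted one-dimensional dyadic grids, reducing the covering claim to dimension one. Suppose in dimension one we can produce two dyadic grids $\mathscr{D}^1_0$ and $\mathscr{D}^1_1$ such that every interval $I$ has an ancestor $J \in \mathscr{D}^1_0 \cup \mathscr{D}^1_1$ with $\ell_J \le 6\ell_I$. Then for each $\alpha \in \{0,1\}^n$ the product grid $\mathscr{D}_\alpha = \mathscr{D}^1_{\alpha_1} \times \cdots \times \mathscr{D}^1_{\alpha_n}$ is a general dyadic grid in $\mathbb{R}^n$ in the sense of the paper's definition. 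Given a cube $Q$ of sidelength $\ell_Q$, apply the 1D result to each of its $n$ coordinate projections to pick $\alpha_i \in \{0,1\}$ together with a 1D ancestor of length at most $6\ell_Q$; the Cartesian product of these 1D ancestors is a cube in $\mathscr{D}_\alpha$ containing $Q$ with sidelength at most $6\ell_Q$.

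For the construction in dimension one, I would take $\mathscr{D}^1_0$ to be the standard dyadic grid, and define $\mathscr{D}^1_1$ by shifting every generation-$k$ interval of $\mathscr{D}^1_0$ (sidelength $2^k$) by the scale-dependent amount $(-1)^k \cdot 2^k/3$. The alternating sign is essential for the nesting axiom: the difference of shifts between consecutive generations equals $(-1)^{k-1}(2^{k-1}/3 + 2^k/3) = (-1)^{k-1} 2^{k-1}$, which is exactly half a sidelength of the coarser generation, so a generation-$(k-1)$ interval of $\mathscr{D}^1_1$ lands as one of the two canonical halves of its generation-$k$ parent in $\mathscr{D}^1_1$. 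A short direct verification then confirms all three axioms of a general dyadic grid for $\mathscr{D}^1_1$, and the product structure transports them to each $\mathscr{D}_\alpha$.

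To verify the covering property in dimension one, fix $I$ with $2^{k-1} < \ell_I \le 2^k$. If $I$ is contained in some generation-$(k+1)$ interval of $\mathscr{D}^1_0$, we are done with ratio at most $2^{k+1}/\ell_I < 4$. Otherwise $I$ must straddle a point $p = 2^{k+1} m$, so $I \subset [p - \ell_I, p + \ell_I] \subset [p - 2^k, p + 2^k]$. A direct computation from the $1/3$-shift definition then shows that, depending on the parity of $m$, one of the two generation-$(k+2)$ intervals (of $\mathscr{D}^1_0$ or $\mathscr{D}^1_1$) containing $p$ places $p$ at distance at least $2^{k+1}$ from both endpoints, and hence contains the enlarged interval $[p - 2^k, p + 2^k] \supset I$. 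This ancestor has length $2^{k+2}$, giving the ratio $2^{k+2}/\ell_I$, which a small case analysis sharpens down to the stated bound $6$ (in the regime where $\ell_I$ is close to $2^{k-1}$ one uses instead a smaller generation ancestor).

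The main obstacle, already visible in the sketch, is the simultaneous bookkeeping: the shift must be calibrated so that it is both (a) compatible with the nesting axiom, forcing consecutive-scale shifts to differ by a half-sidelength, and (b) large enough to furnish the clearance around every straddled standard dyadic endpoint. The classical one-third trick reconciles these two demands, and the constant $6$ emerges from tracking the worst case $\ell_I$ near $2^k$ against the length $2^{k+2}$ of the clearance-providing ancestor.
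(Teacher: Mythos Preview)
The paper does not actually prove this proposition; it is quoted as a known fact with a pointer to \cite{HP}. Your approach---the ``one-third trick'' together with product grids---is the standard one, and the verification that the shifted system $\mathscr{D}^1_1$ is a genuine dyadic grid is correct.

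There is, however, a real gap in the passage from dimension one to dimension $n$. The product grid $\mathscr{D}_\alpha=\mathscr{D}^1_{\alpha_1}\times\cdots\times\mathscr{D}^1_{\alpha_n}$ consists of \emph{cubes}, i.e.\ products $J_1\times\cdots\times J_n$ with $\ell_{J_1}=\cdots=\ell_{J_n}$. Your one-dimensional covering argument, as written, produces an ancestor at generation $k+1$ in the favourable case and at generation $k+2$ otherwise; applied coordinatewise, different coordinates may land in different cases, and the resulting product is then a rectangle that does not belong to any $\mathscr{D}_\alpha$. The ``small case analysis'' you invoke to sharpen $8$ down to $6$ is also left vague.

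Both issues disappear if you state the one-dimensional lemma with a \emph{fixed} scale depending only on $\ell_I$: given $I$, let $j$ be the unique integer with $3\ell_I\le 2^j<6\ell_I$. Since the endpoints of $\mathscr{D}^1_0$ and $\mathscr{D}^1_1$ at scale $2^j$ are at mutual distance at least $2^j/3\ge\ell_I$, the interval $I$ cannot contain endpoints of both systems, hence lies in some length-$2^j$ interval of one of them. Now $j$ is the same for every coordinate projection of a cube $Q$, so the product of the $n$ one-dimensional ancestors is a cube in $\mathscr{D}_\alpha$ of sidelength $2^j<6\ell_Q$, and the bound $6$ comes out directly with no extra case analysis.
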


It follows from this Proposition that
\begin{equation}\label{interm}
Mf(x)\le 6^n\sum_{\a=1}^{2^n}M^{{\mathscr{D}}_{\a}}f(x).
\end{equation}

By the Calder\'on-Zygmund decomposition, if $\{x:M^df(x)>2^{(n+1)k}\}=\cup_{j}{Q_j^k}$, then the family $\{Q_j^k\}$ is sparse and
$$M^df(x)\le 2^{n+1}\sum_{k,j}f_{Q_j^k}\chi_{E_j^k}(x)\le 2^{n+1}{\mathcal A}f(x).$$
From this and from (\ref{interm}),
\begin{equation}\label{pb}
Mf(x)\le 2\cdot 12^n\sum_{\a=1}^{2^n}{\mathcal A}_{{\mathscr{D}}_{\a},{\mathcal S}_{\a}}f(x),
\end{equation}
where ${\mathcal S}_{\a}\in {\mathscr{D}}_{\a}$ depends on $f$.
This implies (\ref{esn1}) with $c(n)=2\cdot 24^n$.

\subsection{Proof of (\ref{esn2})}
We start with the following lemma by T. Hyt\"onen and M.~Lacey \cite{HL}.

\begin{lemma}\label{HL} If ${\mathbb S}$ has complexity $\kappa$, then for any dyadic $Q$
$$\o_{\la}({\mathbb S}_{\natural}f;Q)\le c(\la,n)\Big(\kappa |f|_Q+\sum_{i=1}^{\kappa}|f|_{Q^{(i)}}\Big).$$
\end{lemma}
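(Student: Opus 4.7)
The plan is to split $f$ by its position relative to $Q$ and to exploit that a Haar shift of complexity $\kappa$ produces functions constant on dyadic cubes of sidelength $\ge 2^{-\kappa-1}\ell_{R}$. Write $f=f_{\infty}+g_{0}+\sum_{i=1}^{\kappa}g_{i}$ with $f_{\infty}=f\chi_{\mathbb{R}^{n}\setminus Q^{(\kappa)}}$, $g_{0}=f\chi_{Q}$, and $g_{i}=f\chi_{Q^{(i)}\setminus Q^{(i-1)}}$. For $x\in Q$, every $R$ contributing to $\mathbb{S}_{R}f_{\infty}(x)$ must contain $x$ and meet $\mathbb{R}^{n}\setminus Q^{(\kappa)}$, which by dyadicity forces $R\supsetneq Q^{(\kappa)}$, and for such $R$ the function $\mathbb{S}_{R}f_{\infty}$ is constant on $Q$. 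Hence each truncation $\mathbb{S}_{\varepsilon,v}f_{\infty}$, and therefore $\mathbb{S}_{\natural}f_{\infty}$, takes a single value $c$ on $Q$. Applying the triangle inequality at each truncation level gives $|\mathbb{S}_{\natural}f-\mathbb{S}_{\natural}f_{\infty}|\le\mathbb{S}_{\natural}(f-f_{\infty})$, so taking $c$ as the trial constant for the oscillation reduces the task to bounding $\bigl(\mathbb{S}_{\natural}(g_{0}+\sum_{i}g_{i})\chi_{Q}\bigr)^{*}(\lambda|Q|)$.

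The annular pieces I plan to handle by a deterministic pointwise bound. At $x\in Q$ only ancestors $R=Q^{(j)}$ with $j\ge i$ give $\mathbb{S}_{R}g_{i}(x)\neq 0$, and the elementary estimate $|\mathbb{S}_{R}h(x)|\le|h|_{R}$---immediate from $\|h_{Q'}^{Q''}\|_{\infty}\|h_{Q''}^{Q'}\|_{\infty}\le 1$ together with the disjoint covering of $R$ by the inner cubes $Q'$---yields $|\mathbb{S}_{Q^{(j)}}g_{i}(x)|\le|g_{i}|_{Q^{(j)}}\le 2^{n(i-j)}|f|_{Q^{(i)}}$. Summing the geometric series in $j\ge i$ produces the pointwise bound $\mathbb{S}_{\natural}g_{i}(x)\le c(n)|f|_{Q^{(i)}}$ on $Q$, and summing over $i$ contributes the $\sum_{i=1}^{\kappa}|f|_{Q^{(i)}}$ term of the claimed estimate.

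For $g_{0}=f\chi_{Q}$ I would invoke the weak-type $(1,1)$ bound $\|\mathbb{S}_{\natural}\|_{L^{1}\to L^{1,\infty}}\le c(n)\kappa$ established in \cite{HLMORSU}. Since $\|g_{0}\|_{L^{1}}=|Q||f|_{Q}$, this gives
$$\bigl|\{x\in Q:\mathbb{S}_{\natural}g_{0}(x)>c(n)\kappa|f|_{Q}/\lambda\}\bigr|\le\lambda|Q|,$$
so outside a set of measure at most $\lambda|Q|$ in $Q$ we have $|\mathbb{S}_{\natural}f(x)-c|\le c(n)\kappa|f|_{Q}/\lambda+c(n)\sum_{i=1}^{\kappa}|f|_{Q^{(i)}}$, which is exactly the claimed oscillation bound. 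The main obstacle is precisely this linear-in-$\kappa$ endpoint estimate: an $L^{2}$-Chebyshev argument applied directly to $f\chi_{Q^{(\kappa)}}$ would introduce the prohibitive factor $(|Q^{(\kappa)}|/|Q|)^{1/2}=2^{n\kappa/2}$, so the proof genuinely depends on \cite{HLMORSU}; everything else is elementary geometric-series bookkeeping.
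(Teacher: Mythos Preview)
The paper does not give its own proof of this lemma; it is quoted directly from Hyt\"onen--Lacey \cite{HL}. Your sketch is correct and is essentially the argument found there: split $f$ into the far piece $f\chi_{\mathbb{R}^n\setminus Q^{(\kappa)}}$ (whose contribution to each truncation is constant on $Q$ by the complexity constraint, producing the centering constant), the annular pieces $f\chi_{Q^{(i)}\setminus Q^{(i-1)}}$ (handled pointwise by the geometric series $\sum_{j\ge i}|\,\cdot\,|_{Q^{(j)}}$ using $|{\mathbb S}_Rh|\le|h|_R$), and the local piece $f\chi_Q$ (handled via the weak $(1,1)$ bound for ${\mathbb S}_\natural$ with linear dependence on $\kappa$). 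Your identification of the linear-in-$\kappa$ endpoint bound as the one genuinely nontrivial input is exactly right; the rest is, as you say, elementary bookkeeping.
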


Observe that ``dyadic" here means that $Q\in {\mathscr{D}}$ if ${\mathbb S}={\mathbb S}_{\mathscr{D}}$.
Combining  Lemma \ref{HL} with Theorem \ref{decom1}, we get
$$|({\mathbb S}_{{\mathscr{D}}}^{m,k})_{\natural}f(x)-m_{({\mathbb S}_{{\mathscr{D}}}^{m,k})_{\natural}f}(Q_0)|\le c(n)
\Big(\kappa Mf(x)+\kappa {\mathcal A}|f|(x)+\sum_{i=1}^{\kappa}{\mathcal A}_i|f|(x)\Big).$$
Assuming that $f$ is bounded and with compact support,
we have by (\ref{pro1}) that $m_{({\mathbb S}_{{\mathscr{D}}}^{m,k})_{\natural}f}(Q)\to 0$ as $Q$ expands unboundedly. Therefore, the previous inequality
combined with Fatou's lemma (\ref{fatou}) implies
$$\|({\mathbb S}_{{\mathscr{D}}}^{m,k})_{\natural}f\|_{X}\le c(n)\kappa \big(\|Mf\|_{X}+\|{\mathcal A}|f|\|_{X}\big)
+\sum_{i=1}^{\kappa}\|{\mathcal A}_i|f|\|_{X}.$$
From this and from (\ref{esn1}) we get that in order to prove (\ref{esn2}) it suffices to show that
\begin{equation}\label{mains}
\|{\mathcal A}_i|f|\|_{X}\le c(n)i \|{\mathcal A}|f|\|_{X}.
\end{equation}
Exactly as above, one can assume that ${\mathcal A}_i$ is defined by means of the standard dyadic grid. Also, since we shall deal below
only with ${\mathcal A}_i$ and $M$, one can assume that $f\ge 0$.

Consider the formal adjoint of ${\mathcal A}_i$:
$${\mathcal A}_i^{\star}f(x)=\frac{1}{2^{in}}\sum_{k,j}f_{Q_j^k}\chi_{(Q_j^k)^{(i)}}(x).$$
Our goal is to show that the operator ${\mathcal A}_i^{\star}$ is of weak type $(1,1)$ with the bound depending linearly on $i$. This will be done by
the classical Calder\'on-Zygmund argument. Hence we start with the $L^2$ boundedness of ${\mathcal A}_i^{\star}$. In the proof below we use the well known fact that $\|M^d\|_{L^p}\le p'$.

\begin{prop}\label{a2b} For any $i\in{\mathbb N}$,
$$\|{\mathcal A}_i^{\star}f\|_{L^2}=\|{\mathcal A}_if\|_{L^2}\le 8\|f\|_{L^2}.$$
\end{prop}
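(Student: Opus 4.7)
The plan is to first verify that ${\mathcal A}_i^{\star}$ is indeed the $L^2$ formal adjoint of ${\mathcal A}_i$, which gives the equality of operator norms for free, and then to prove the $L^2$ bound for ${\mathcal A}_i$ by duality, mimicking the sparseness-plus-maximal-function argument used to derive \eqref{estcmp}. The factor $2^{-in}$ built into the definition of ${\mathcal A}_i^{\star}$ is precisely what is needed for this duality, since $|(Q_j^k)^{(i)}|=2^{in}|Q_j^k|$.

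For the adjoint identity, a direct computation for $f,g\ge 0$ shows that both $\int({\mathcal A}_i f)g\,dx$ and $\int f\cdot {\mathcal A}_i^{\star}g\,dx$ equal
$$\sum_{k,j}\frac{1}{|(Q_j^k)^{(i)}|}\Big(\int_{(Q_j^k)^{(i)}}f\Big)\Big(\int_{Q_j^k}g\Big),$$
so the operator norms of ${\mathcal A}_i$ and ${\mathcal A}_i^{\star}$ on $L^2$ agree. Since ${\mathcal A}_i|f|\ge|{\mathcal A}_i f|$ pointwise, I may reduce the main bound to proving $\int({\mathcal A}_i f)g\,dx\le 8\|f\|_{L^2}\|g\|_{L^2}$ for $f,g\ge 0$.

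For this, sparseness ($|Q_j^k|\le 2|E_j^k|$ with the $E_j^k$ pairwise disjoint) gives
$$\int({\mathcal A}_i f)g\,dx=\sum_{k,j}f_{(Q_j^k)^{(i)}}\,g_{Q_j^k}\,|Q_j^k|\le 2\sum_{k,j}f_{(Q_j^k)^{(i)}}\,g_{Q_j^k}\,|E_j^k|.$$
On $E_j^k\subset Q_j^k\subset (Q_j^k)^{(i)}$ the two averages are dominated pointwise by $M^d f$ and $M^d g$ respectively, so the right-hand side is at most $2\int_{{\mathbb R}^n}(M^d f)(M^d g)\,dx$. Cauchy--Schwarz and $\|M^d\|_{L^2\to L^2}\le 2$ (a consequence of $\|M^d\|_{L^p}\le p'$) yield the constant $2\cdot 2\cdot 2=8$.

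No real obstacle is expected; the point worth flagging is that the constant is independent of $i$. The ancestor index enters only through the containment $E_j^k\subset(Q_j^k)^{(i)}$, which is already enough to activate the $M^d$ bound, and all combinatorial bookkeeping about how many dilates $(Q_j^k)^{(i)}$ can cover a given point is absorbed into the $L^2$ boundedness of $M^d$. This $i$-independence at the $L^2$ level is what allows the Calder\'on--Zygmund decomposition in the sequel to produce only a linear dependence on $i$ in the weak-type $(1,1)$ bound.
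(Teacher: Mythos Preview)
Your argument is correct and is essentially the same as the paper's: the paper likewise expands $\int(\mathcal{A}_i f)g\,dx=\sum_{k,j}f_{(Q_j^k)^{(i)}}g_{Q_j^k}|Q_j^k|$, uses sparseness to pass to the disjoint sets $E_j^k$, dominates by $2\int(M^d f)(M^d g)\,dx$, and finishes with H\"older and $\|M^d\|_{L^2}\le 2$. Your additional verification of the adjoint identity and the remark on $i$-independence are more explicit than what the paper writes, but they do not change the route.
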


\begin{proof}
Similarly to the proof of (\ref{estcmp}), we have
\begin{eqnarray*}
\int_{{\mathbb R}^n}({\mathcal A}_if)gdx=\sum_{k,j}f_{(Q_j^k)^{(i)}}g_{Q_j^k}|Q_j^k|&\le& 2\sum_{k,j}\int_{E_j^k}(M^df)(M^dg)dx\\
&\le& 2\int_{{\mathbb R}^n}(M^df)(M^dg)dx.
\end{eqnarray*}
From this, using H\"older's inequality, the $L^2$ boundedness of $M^d$ and duality, we get the $L^2$ bound for ${\mathcal A}_i$.
\end{proof}

\begin{lemma}\label{weaktype} For any $i\in{\mathbb N}$,
$$\|{\mathcal A}_i^{\star}f\|_{L^{1,\infty}}\le ci\|f\|_{L^1},$$
where $c$ is an absolute constant (for $i$ big enough one can take $c=5$).
\end{lemma}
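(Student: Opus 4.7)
The plan is to establish the weak-type endpoint by the classical Calder\'on--Zygmund decomposition, using Proposition \ref{a2b} for the good part. The decisive point is to show that the bad part grows only linearly (and not exponentially) in $i$, which requires simultaneously exploiting the mean-zero property of the bad atoms and the pairwise disjointness of the sparse family at each fixed scale.

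Fix $\alpha>0$ and let $\{R_l\}$ be the maximal dyadic cubes with $f_{R_l}>\alpha$; set $\Omega=\bigcup_l R_l$. Perform the usual decomposition $f=g+b$, $b=\sum_l b_l$ with $b_l=(f-f_{R_l})\chi_{R_l}$. The good part is handled by Chebyshev and Proposition \ref{a2b}: since $\|g\|_{L^2}^2 \le c_n\alpha\|f\|_{L^1}$,
\[
|\{x:{\mathcal A}_i^\star g(x)>\alpha/2\}| \le \frac{4}{\alpha^2}\|{\mathcal A}_i^\star g\|_{L^2}^2 \le \frac{c_n}{\alpha}\|f\|_{L^1}.
\]

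For the bad part I would use the critical observation that $b_l$ is supported in $R_l$ with mean zero, so for every sparse cube $Q_j^k$ with $Q_j^k\supseteq R_l$ one has $\int_{Q_j^k}b_l=\int_{R_l}b_l=0$, killing that contribution. Hence
\[
{\mathcal A}_i^\star b_l(x) = \sum_{Q_j^k\subsetneq R_l}\frac{\chi_{(Q_j^k)^{(i)}}(x)}{|(Q_j^k)^{(i)}|}\int_{Q_j^k}b_l,
\]
and integrating $|{\mathcal A}_i^\star b_l|$ over $R_l^c$ reduces matters to sparse cubes $Q_j^k\subsetneq R_l$ whose $i$-th ancestor escapes $R_l$. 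By dyadic nesting any such ancestor must equal $R_l^{(s)}$ for some $s\in\{1,\dots,i-1\}$, so $\ell(Q_j^k)=2^{s-i}\ell(R_l)$ takes at most $i$ distinct values. For each fixed value, the sparse property (disjointness at a fixed level) gives $\sum\int_{Q_j^k}|b_l| \le \|b_l\|_{L^1}$; summing over the admissible values of $s$ yields
\[
\int_{R_l^c}|{\mathcal A}_i^\star b_l|\,dx \le i\,\|b_l\|_{L^1}.
\]

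Summing over $l$ and using $\|b\|_{L^1}\le 2\|f\|_{L^1}$ gives $\int_{\Omega^c}|{\mathcal A}_i^\star b|\,dx\le 2i\|f\|_{L^1}$, and Chebyshev combined with $|\Omega|\le\|f\|_{L^1}/\alpha$ finishes the bad-part estimate with leading coefficient $4i$. For $i$ sufficiently large the dimension-dependent contribution from the good part is dominated by $4i$, and the constant $c=5$ suffices.

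The main obstacle I anticipate is avoiding the naive move of enlarging the bad set to $\bigcup_l R_l^{(i)}$, whose measure bound $2^{in}\|f\|_{L^1}/\alpha$ is ruinous. Linear-in-$i$ behaviour is recovered only by using mean-zero cancellation to discard the contributions from $Q_j^k\supsetneq R_l$ (which are geometrically dominant because their $i$-th ancestors are enormous) and then by repackaging each of the remaining $i$ admissible scales into a single $\|b_l\|_{L^1}$ budget via sparse disjointness at a fixed level.
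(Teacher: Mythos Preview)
Your proposal is correct and follows essentially the same route as the paper: Calder\'on--Zygmund decomposition, Proposition~\ref{a2b} for the good part, mean-zero of $b_l$ to discard sparse cubes $Q_j^k\supseteq R_l$, and then the observation that the surviving cubes with $Q_j^k\subsetneq R_l\subseteq (Q_j^k)^{(i)}$ occupy at most $i-1$ dyadic scales inside $R_l$, each scale contributing at most $\|b_l\|_{L^1}$. The paper phrases this last count as the pointwise bound $\sum\chi_{Q_j^k}\le (i-1)\chi_{R_l}$ and arrives at the same $4(i-1)/\alpha$ for the bad part; your ``disjointness at a fixed level'' is really just the trivial disjointness of distinct dyadic cubes of equal sidelength (the sparse condition only enters to ensure the cubes in the family are distinct), but the conclusion is identical.
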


\begin{proof} Let $\Omega=\{x:M^df(x)>\a\}=\cup_lQ_l$, where $Q_l$ are maximal pairwise disjoint dyadic cubes such that
$f_{Q_l}>\a$. Set also
$$b_l=(f-f_{Q_l})\chi_{Q_l},\quad b=\sum_lb_l$$
and $g=f-b$. We have
\begin{eqnarray}
|\{x:|{\mathcal A}_i^{\star}f(x)|>\a\}|&\le& |\Omega|+|\{x:|{\mathcal A}_i^{\star}g(x)|>\a/2\}|\nonumber\\
&+&|\{x\in \Omega^c:|{\mathcal A}_i^{\star}b(x)|>\a/2\}|.\label{set}
\end{eqnarray}
Further, $|\Omega|\le \frac{1}{\a}\|f\|_{L_1}$, and, by the $L^2$ boundedness of ${\mathcal A}_i^{\star}$,
$$|\{x:|{\mathcal A}_i^{\star}g(x)|>\a/2\}|\le \frac{4}{\a^2}\|{\mathcal A}_i^{\star}g\|_{L^2}^2\le \frac{c}{\a^2}\|g\|_{L^2}^2\le \frac{c}{\a}\|g\|_{L^1}\le \frac{c}{\a}\|f\|_{L^1}.$$

It remains therefore to estimate the term in (\ref{set}). For $x\in \Omega^c$ consider
$${\mathcal A}_i^{\star}b(x)=\frac{1}{2^{in}}\sum_l\sum_{k,j}(b_l)_{Q_j^k}\chi_{(Q_j^k)^{(i)}}(x).$$
The second sum is taken over those cubes $Q_j^k$ for which $Q_j^k\cap Q_l\not=\emptyset$. If $Q_l\subset Q_j^k$, then
$(b_l)_{Q_j^k}=0$. Therefore one can assume that $Q_j^k\subset Q_l$. On the other hand, if $(Q_j^k)^{(i)}\cap \Omega^c\not=\emptyset$, then
$Q_l\subset (Q_j^k)^{(i)}$. Hence, for $x\in \Omega^c$ we have
$${\mathcal A}_i^{\star}b(x)=\frac{1}{2^{in}}\sum_l\sum_{k,j:Q_j^k\subset Q_l\subset (Q_j^k)^{(i)}}(b_l)_{Q_j^k}\chi_{(Q_j^k)^{(i)}}(x).$$
The latter sum is nontrivial if $i\ge 2$. In this case the family of all dyadic cubes $Q$ for which $Q\subset Q_l\subset Q^{(i)}$ can be decomposed into
$i-1$ families of disjoint cubes of equal length. Therefore,
$$\sum_{k,j:Q_j^k\subset Q_l\subset (Q_j^k)^{(i)}}\chi_{Q_j^k}\le (i-1)\chi_{Q_l}.$$
From this we get
\begin{eqnarray*}
&&|\{x\in \Omega^c:|{\mathcal A}_i^{\star}b(x)|>\a/2\}|\le \frac{2}{\a}\|{\mathcal A}_i^{\star}b\|_{L^1(\Omega^c)}\\
&&\le \frac{2}{\a}\sum_l\sum_{k,j:Q_j^k\subset Q_l\subset (Q_j^k)^{(i)}}\int_{Q_j^k}|b_l|dx\le \frac{2(i-1)}{\a}\sum_l
\int_{Q_l}|b_l|dx\\
&&\le\frac{4(i-1)}{\a}\|f\|_{L^1}.
\end{eqnarray*}
The proof is complete.
\end{proof}

\begin{lemma}\label{locosc} Let $i\in {\mathbb N}$. For any dyadic cube $Q$,
$$\o_{\la_n}({\mathcal A}_i^{\star}f;Q)\le c(n)if_Q.$$
\end{lemma}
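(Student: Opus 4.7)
My plan is to combine the dyadic structure of ${\mathcal A}_i^{\star}$ with the weak-type $(1,1)$ bound just established in Lemma \ref{weaktype}. The first move is to exploit the fact that each $(Q_j^k)^{(i)}$ is itself dyadic, so relative to a fixed dyadic $Q$ it either contains $Q$, sits strictly inside $Q$, or is disjoint from $Q$. The terms of the first kind contribute a \emph{constant} on $Q$, and that constant is the natural candidate to subtract inside the infimum defining $\o_{\la_n}({\mathcal A}_i^{\star}f;Q)$.

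Concretely, I would set
$$
c=\frac{1}{2^{in}}\sum_{k,j:\,(Q_j^k)^{(i)}\supseteq Q} f_{Q_j^k},
$$
so that on $Q$,
$$
\big({\mathcal A}_i^{\star}f-c\big)\chi_Q(x)=T(x):=\frac{1}{2^{in}}\sum_{k,j:\,(Q_j^k)^{(i)}\subsetneq Q} f_{Q_j^k}\,\chi_{(Q_j^k)^{(i)}}(x).
$$
Since $f\ge 0$ this $T$ is non-negative and vanishes off $Q$. The key observation is that every $Q_j^k$ appearing in $T$ satisfies $Q_j^k\subsetneq Q$ (as $Q_j^k\subseteq (Q_j^k)^{(i)}\subsetneq Q$), whence $(f\chi_Q)_{Q_j^k}=f_{Q_j^k}$. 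Consequently $T(x)\le {\mathcal A}_i^{\star}(f\chi_Q)(x)$ pointwise, because the latter is a larger sum of non-negative terms.

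At this point I would simply apply Lemma \ref{weaktype} to the datum $f\chi_Q$, obtaining $|\{T>\a\}|\le \frac{ci}{\a}\|f\chi_Q\|_{L^1}=\frac{c\,i\,f_Q\,|Q|}{\a}$. Choosing $\a=c\,i\,f_Q/\la_n$ makes the right-hand side equal to $\la_n|Q|$, so that $T^*(\la_n|Q|)\le c\,i\,f_Q/\la_n=c(n)\,i\,f_Q$. Since $\o_{\la_n}({\mathcal A}_i^{\star}f;Q)\le T^*(\la_n|Q|)$ by definition of the local mean oscillation, this is the desired bound.

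I do not expect any real obstacle in implementing this plan: everything hinges on the dyadic trichotomy which cleanly extracts the constant piece, and the quantitative factor of $i$ is already packaged inside the weak-$(1,1)$ bound of Lemma \ref{weaktype}. The only mildly delicate point worth double-checking is that subtracting $c$ really annihilates all terms with $(Q_j^k)^{(i)}\supseteq Q$ on $Q$, but this is immediate from the nesting of dyadic cubes.
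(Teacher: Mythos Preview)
Your proposal is correct and is essentially identical to the paper's own proof: the paper also subtracts the constant $c=\frac{1}{2^{in}}\sum_{k,j:\,Q\subseteq (Q_j^k)^{(i)}}f_{Q_j^k}$, observes that the remaining piece on $Q$ is dominated by ${\mathcal A}_i^{\star}(f\chi_Q)$, and then invokes the weak $(1,1)$ bound of Lemma~\ref{weaktype} to conclude. The only difference is presentational---you spell out the rearrangement computation and the inclusion $Q_j^k\subset Q$ a bit more explicitly than the paper does.
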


\begin{proof} For $x\in Q$,
$$\frac{1}{2^{in}}\sum_{k,j:Q\subseteq (Q_j^k)^{(i)}}f_{Q_j^k}\chi_{(Q_j^k)^{(i)}}(x)=\frac{1}{2^{in}}\sum_{k,j:Q\subseteq (Q_j^k)^{(i)}}f_{Q_j^k}\equiv c.$$
Hence
$$|{\mathcal A}_i^{\star}f(x)-c|\chi_Q(x)=\frac{1}{2^{in}}\sum_{k,j:(Q_j^k)^{(i)}\subset Q}f_{Q_j^k}\chi_{(Q_j^k)^{(i)}}(x)\le {\mathcal A}_i^{\star}(f\chi_Q)(x).$$
From this and from Lemma \ref{weaktype},
$$\inf_c(({\mathcal A}_i^{\star}f-c)\chi_Q)^*(\la_n|Q|)\le ({\mathcal A}_i^{\star}(f\chi_Q))^*(\la_n|Q|)\le c(n)if_Q,$$
which completes the proof.
\end{proof}

We are ready now to prove (\ref{mains}). By the standard limiting argument, one can assume that the sum defining ${\mathcal A}_i$ is finite.
Then $m_{{\mathcal A}_i^{\star}f}(Q)=0$ for $Q$ big enough. Hence, By Lemma \ref{locosc} and Theorem \ref{decom1}, for a.e. $x\in~Q$,
$${\mathcal A}_i^{\star}f(x)\le c(n)i\big(Mf(x)+{\mathcal A}f(x)\big)$$
(notice that here ${\mathcal A}_i$ and ${\mathcal A}$ are taken with respect to different sparse families).
From this and from (\ref{pb}), and using that the operator ${\mathcal A}$ is self-adjoint, for any $g\ge 0$ we have
\begin{eqnarray*}
\int_{{\mathbb R}^n}({\mathcal A}_if)gdx&=&\int_{{\mathbb R}^n}f({\mathcal A}_i^{\star}g)dx\\
&\le& c_n i\sum_{\a=1}^{2^n+1}\int_{{\mathbb R}^n}f({\mathcal A}_{{\mathscr{D}}_{\a},{\mathcal S}_{\a}}g)dx\\
&=&c_n i\sum_{\a=1}^{2^n+1}\int_{{\mathbb R}^n}({\mathcal A}_{{\mathscr{D}}_{\a},{\mathcal S}_{\a}}f)gdx\le c'_n i\sup_{{\mathscr{D}},{\mathcal S}}
\|{\mathcal A}_{{\mathscr{D}},{\mathcal S}}f\|_X\|g\|_{X'}.
\end{eqnarray*}
Applying (\ref{dua}) yields (\ref{mains}), and therefore the proof of Theorem \ref{mainr} is complete.

\section{Proof of Theorems \ref{difch} and \ref{twosuf}}

\begin{proof}[Proof of Theorem \ref{difch}]
Using the same argument as in the proof of (\ref{estcmp}), we have
\begin{eqnarray*}
&&\int_{{\mathbb R}^n}({\mathcal A}f)g=\sum_{j,k}f_{Q_j^k}g_{Q_j^k}|Q_j^k|
\le 2\sum_{j,k}\int_{E_j^k}\mathcal M(f,g)dx\\
&&\le 2\int_{{\mathbb R}^n}\mathcal M(f,g)dx\le 2
\|{\mathcal M}\|_{L^p(v)\times L^{p'}(u^{1-p'})\to L^1}\|f\|_{L^p(v)}\|g\|_{L^{p'}(u^{1-p'})}.
\end{eqnarray*}
Taking the supremum over $g$ with $\|g\|_{L^{p'}(u^{1-p'})}=1$ gives
$$
\|{\mathcal A}\|_{L^p(v)\to L^p(u)}\le 2
\|{\mathcal M}\|_{L^p(v)\times L^{p'}(u^{1-p'})\to L^1}.
$$

On the other hand, by Proposition \ref{prhp},
\begin{equation}\label{fg}
{\mathcal M}(f,g)(x)\le 12^n\sum_{\a=1}^{2^n}{\mathcal M}^{{\mathscr{D}}_{\a}}(f,g)(x).
\end{equation}
Consider ${\mathcal M}^d(f,g)$ taken with respect to the standard dyadic grid. Suppose that $f,g\ge 0$ and $f,g\in L^1$.  We will use
exactly the same argument as in the Calder\'on-Zygmund decomposition. For $c_n$ which will be specified below and for
$k\in {\mathbb Z}$ consider the sets
$$\Omega_k=\{x\in {\mathbb R}^n: {\mathcal M}^d(f,g)(x)>c_n^k\}.$$
Then we have that $\Omega_k=\cup_jQ_j^k$, where the cubes $Q_j^k$ are pairwise disjoint with $k$ fixed, and
$$c_n^k<f_{Q_j^k}g_{Q_j^k}\le 2^{2n}c_n^k.$$
From this and from H\"older's inequality,
\begin{eqnarray*}
|Q_j^k\cap \Omega_{k+1}|&=&\sum_{Q_i^{k+1}\subset Q_j^k}|Q_i^{k+1}|\\
&<&c_n^{-\frac{k+1}{2}}\sum_{Q_i^{k+1}\subset Q_j^k}\left(\int_{Q_i^{k+1}}f\int_{Q_i^{k+1}}g\right)^{1/2}\\
&\le& c_n^{-\frac{k+1}{2}}\left(\int_{Q_j^{k}}f\int_{Q_j^{k}}g\right)^{1/2}\le 2^nc_n^{-1/2}|Q_j^k|
\end{eqnarray*}
Hence, taking $c_n=2^{2(n+1)}$, we obtain that the family $\{Q_j^k\}$ is sparse, and
$${\mathcal M}^d(f,g)(x)\le 2^{2(n+1)}\sum_{j,k}f_{Q_j^k}g_{Q_j^k}\chi_{Q_j^k}(x).$$
Therefore,
$$\int_{{\mathbb R}^n}{\mathcal M}^d(f,g)dx\le 2^{2(n+1)}\sum_{j,k}f_{Q_j^k}g_{Q_j^k}|Q_j^k|=2^{2(n+1)}\int_{{\mathbb R}^n}({\mathcal A}f)gdx.$$
From this and from (\ref{fg}), applying H\"older's inequality, we obtain
\begin{eqnarray*}
&&\int_{{\mathbb R}^n}{\mathcal M}(f,g)(x)dx\le 4\cdot 48^n\sum_{\a=1}^{2^n}\int_{{\mathbb R}^n}({\mathcal A}_{{\mathscr{D}}_{\a},{\mathcal S}_{\a}}f)gdx\\
&&\le 4\cdot 48^n\sum_{\a=1}^{2^n}\|{\mathcal A}_{{\mathscr{D}}_{\a},{\mathcal S}_{\a}}f\|_{L^p(u)}\|g\|_{L^{p'}(u^{1-p'})}\\
&&\le 4\cdot 96^n\sup_{{\mathscr{D}},{\mathcal S}}\|{\mathcal A}_{{\mathscr{D}},{\mathcal S}}\|_{L^p(v)\to L^p(u)}\|f\|_{L^p(v)}\|g\|_{L^{p'}(u^{1-p'})},
\end{eqnarray*}
which completes the proof.
\end{proof}

\begin{proof}[Proof of Theorem \ref{twosuf}]
By H\"older's inequality (\ref{hol}),
$$
|f|_Q|g|_Q\le \|fv^{1/p}\|_{Y',Q}\|v^{-1/p}\|_{Y,Q}\|gu^{-1/p}\|_{X',Q}\|u^{1/p}\|_{X,Q}.
$$
Hence,
$$
{\mathcal M}(f,g)(x)\le c(u,v)M_{Y'}(fv^{1/p})(x)M_{X'}(gu^{-1/p})(x),
$$
where $c(u,v)=\sup_Q\|u^{1/p}\|_{X,Q}\|v^{-1/p}\|_{Y,Q}$.
Therefore, by assumptions on $X'$ and $Y'$ and by the usual H\"older's inequality,
\begin{eqnarray*}
\int_{{\mathbb R}^n}{\mathcal M}(f,g)(x)dx&\le& c(u,v)\|M_{Y'}(fv^{1/p})\|_{L^p}\|M_{X'}(gu^{-1/p})\|_{L^{p'}}\\
&\le& c(u,v)\|f\|_{L^p(v)}\|g\|_{L^{p'}(u^{1-p'})}.
\end{eqnarray*}
Combining this estimate with Theorems \ref{mainr} and \ref{difch} completes the proof.
\end{proof}

\vskip 0.5cm
\noindent
{\bf Added in proof.} We have found \cite{L3} that the main result of this paper can be proved without the use of
the Haar shift operators. This further simplifies the proof of the $A_2$ conjecture.
Almost simultaneously, a proof of the $A_2$ conjecture based on a similar idea was obtained in \cite{HLP}.

\end{document}